\renewcommand*{\bibnamedash}{%
	\leavevmode\raise +0.6ex\hbox to 5.5ex{\hrulefill}.\space\space}
\newtheorem{propositionx}{Proposition}[section]
\newenvironment{proposition}
{\pushQED{\qed}\propositionx}
{\popQED\endpropositionx}
\newenvironment{propositionp}
{\pushQED{\qed}\propositionx}
{\popQED\endpropositionx}
\newtheorem{theorem}[propositionx]{Theorem}
\newenvironment{corollary}
{\pushQED{\qed}\corollaryx}
{\popQED\endcorollaryx}
\newenvironment{lemmap}
{\pushQED{\qed}\lemmax}
{\popQED\endlemmax}
\theoremstyle{remark}
\newtheorem{remark}[propositionx]{Remark}
\newtheorem*{remark*}{Remark}
\newcommand{\bbB}{\mathbb{B}}
\newcommand{\bbC}{\mathbb{C}}
\newcommand{\bbN}{\mathbb{N}}
\newcommand{\bbR}{\mathbb{R}}
\newcommand{\bbS}{\mathbb{S}}
\newcommand{\bbT}{\mathbb{T}}
\newcommand{\bbZ}{\mathbb{Z}}
\newcommand{\calD}{\mathcal{D}}
\newcommand{\calF}{\mathcal{F}}
\newcommand{\calJ}{\mathcal{J}}
\newcommand{\calS}{\mathcal{S}}
\newcommand{\frakM}{\mathfrak{M}}
\newcommand{\frakV}{\mathfrak{V}}
\newcommand{\bfA}{\mathbf{A}}
\newcommand{\bfk}{\mathbf{k}}
\newcommand{\bfw}{\mathbf{w}}
\newcommand{\bmtheta}{\bm{\theta}}
\newcommand{\bmSigma}{\bm{\Sigma}}
\newcommand{\dd}{\,\mathrm{d}}
\title{Half-Waves and Spectral Riesz Means on the 3-Torus}
\author{Elliott Fairchild and Ethan Sussman*}
\date{September 28th, 2022 (Last Update), September 22nd, 2021 (Original)}
\email{ethanws@mit.edu, elliottfairchild@college.harvard.edu}
\address{Department of Mathematics, Massachusetts Institute of Technology, Massachusetts 02139-4307, USA}
\subjclass[2020]{35P20, 11Lxx, 42axx}
\begin{document}

\begin{abstract}
	For a full rank lattice $\Lambda \subset \bbR^d$ and $\bfA \in \bbR^d$, consider $N_{d,0;\Lambda,\bfA}(\Sigma) = \# ([\Lambda+\bfA] \cap \Sigma \bbB^d) = \# \{\bfk \in \Lambda : |\bfk+\bfA| \leq \Sigma \}$. Consider the iterated integrals 
	\[
	N_{d,k+1;\Lambda,\bfA}(\Sigma) = \int_0^\Sigma N_{d,k;\Lambda,\bfA}(\sigma) \dd \sigma, 
	\]
	for $k\in \bbN$. 
	After an elementary derivation via the Poisson summation formula of the sharp large-$\Sigma$ asymptotics of $N_{3,k;\Lambda,\bfA}(\Sigma)$ for $k\geq 2$ (these having an $O(\Sigma)$ error term), we discuss how they are encoded in the structure of the Fourier transform $\calF N_{3;\Lambda,\bfA}(\tau)$.
	The analysis is related to H\"ormander's analysis of spectral Riesz means, as the iterated integrals above are weighted spectral Riesz means for the simplest magnetic Schr\"odinger operator on the flat $3$-torus. That the $N_{3,k;\Lambda,\bfA}(\Sigma)$ obey an asymptotic expansion to $O(\Sigma^2)$ is a special case of a general result holding for all magnetic Schr\"odinger operators on all manifolds, and the subleading polynomial corrections can be identified in terms of the Laurent series of the half-wave trace at $\tau=0$. The improvement to $O(\Sigma)$ for $k\geq 2$ follows from a bound on the growth rate of the half-wave trace at late times. 
\end{abstract}

\maketitle

\tableofcontents

\section{Introduction}

For $d\in \bbN^+$, let $\Lambda\subset \bbR^d$ denote a full rank lattice, and let $\operatorname{Covol}(\Lambda)>0$ denote its covolume. Let $\bfA\in \bbR^d$. Consider the function $N_{d;\Lambda,\bfA} : [0,\infty)\to \bbN^+$ given by 
\begin{equation}
	N_{d;\Lambda,\bfA}(\Sigma) = \# ([\Lambda+\bfA] \cap \Sigma \bbB^d) = \# \{\bfk \in \Lambda : |\bfk+\bfA| \leq \Sigma \} ,
	\label{eq:Ndef}
\end{equation}
where $\bbB^d \subset \bbR^d$ is the closed unit ball. (For $\Sigma<0$, we define $N_{d;\Lambda,\bfA}(\Sigma)=0$.)
It is geometrically clear that $\smash{N_{d;\Lambda,\bfA}(\Sigma)} = \Sigma^d \operatorname{Vol} \bbB^d \operatorname{Covol}(\Lambda)^{-1} + O(\Sigma^{d-1})$, as Gauss observed. 
Let $\smash{N_{d;\Lambda} = N_{d;\Lambda,\bf0}}$. 
The \emph{Gauss $d$-ellipsoid problem} ($d$-\emph{sphere} problem when $\Lambda=\bbZ^d$) is to find -- in the form of a polynomial bound (possibly with logarithmic corrections) -- the size of the error $\smash{N_{d;\Lambda}(\Sigma)} - \Sigma^d \operatorname{Vol} \bbB^d \operatorname{Covol}(\Lambda)^{-1} $. The $d=2$ case is known simply as the Gauss ellipse (or circle) problem, and the $d=3$ case is known as the Gauss ellipsoid (or sphere) problem. For $d=2$, it is conjectured that 
\begin{equation}
	N_{2;\Lambda}(\Sigma) = \pi \operatorname{Covol}(\Lambda)^{-1} \Sigma^2 + O( \Sigma^\nu)
	\label{eq:n2c}
\end{equation}
if $\nu > 1/2$. See \cite{Ivic} for a bibliography.  For $d=3$, it is conjectured that 
that 
\begin{equation}
	N_{3;\Lambda}(\Sigma) = (4\pi/3) \operatorname{Covol}(\Lambda)^{-1} \Sigma^3 + O( \Sigma^{\nu})
	\label{eq:n3c}
\end{equation}
if $\nu > 1$. 
Progress can be found in a bevy of works, going back to Hardy, Landau, Ramanujan, Sierpi\'nsky, and van der Corput, allowing any $\nu>3/2$. At present, the best result for the sphere problem is due to Heath-Brown \cite{HeathBrown}, who shows that we may take any $\nu >21/16 =1.3125$ (and whose proof shows that if the generalized Riemann hypothesis holds then, assuming extraneous errors in the proof are dealt with, we may take any $\nu > 5/4$). This built off of previous work of Chamizo and Iwaniec \cite{Chamizo1995}, who showed that \cref{eq:n3c} holds for any $\nu > 29/22 = 1.31818\cdots$, which in turn was based off of papers of Chen \cite{Chen} and Vinogradov \cite{Vinogradov} which allowed any $\nu > 4/3$. See \cite{Lax}\cite{Huxley}\cite{Ivic} for related results. For arbitrary $\Lambda\subset \bbR^3$, Kr\"atzel and Nowak \cite{KratzelNowak1}\cite{KratzelNowak2} established \cref{eq:n3c} for $\nu>74/50$, this being improved by M\"uller \cite{Muller} to $\nu>63/43$ and recently Guo \cite{Guo} to $\nu>231/158\approx 1.46202\cdots$ (these results holding for more general convex bodies as well). See \cite{BentkusGotze}\cite{Gotze} for sharp results for $d\geq 5$. Estimating $N_{3;\Lambda,\bfA}$ is a difficult problem. In this note, we do something much easier: examine some related  quantities which, in contrast to $N_{3,\Lambda,\bfA}$, can be estimated sharply.

Let $N_{d,0;\Lambda,\bfA}=N_{d;\Lambda,\bfA}$ and define, for each $k\in \bbN^+$, $N_{d,k;\Lambda,\bfA}(\Sigma) = \int_0^\Sigma N_{d,k-1;\Lambda,\bfA}(\sigma)\dd \sigma $, i.e. 
\begin{align}
	\begin{split} 
	N_{d,1;\Lambda,\bfA}(\Sigma)&=\int_0^\Sigma N_{d;\Lambda,\bfA}(\sigma) \dd \sigma ,\\ N_{d,2;\Lambda,\bfA}(\Sigma)&=\int_0^\Sigma \int_0^{\sigma} N_{d;\Lambda,\bfA}(\sigma') \dd \sigma' \dd \sigma,\\  
	N_{d,3;\Lambda,\bfA}(\Sigma)&=\int_0^\Sigma \int_0^\sigma \int_0^{\sigma'} N_{d;\Lambda,\bfA}(\sigma'') \dd \sigma'' \dd \sigma' \dd \sigma , \cdots .
	\end{split}
\end{align}
(For $\Sigma\leq 0$, $N_{d;k;\Lambda,\bfA}(\Sigma)=0$.)
We will refer to these functions as ``iterated integrals'' of $\smash{N_{d;\Lambda,\bfA}}$. 
We focus on the case $d=3$. 
Even though it is somewhat straightforward (perhaps modulo relatively unimportant technicalities) to give a direct treatment, the authors do not know of one in the literature. (As we will explain below, H\"ormander \cite{HormanderRiesz}\cite[\S5]{Hormander68} has a general treatment of such asymptotics, but when applied to the case at hand they fail to be sharp.) 
Our main motivation is to examine how the large-$\Sigma$ asymptotics of the $N_{3,k;\Lambda,\bfA}$ are encoded in the Fourier transform of $N_{d;\Lambda,\bfA}$.
All other motivation aside, it seems worthwhile to give a relatively self-contained presentation.

Although we cannot deduce from the results below
anything interesting about the asymptotics of $N_{3;\Lambda,\bfA}$ beyond the easy result 
\begin{equation} 
	N_{3;\Lambda,\bfA} = (4\pi/3) \operatorname{Covol}(\Lambda)^{-1}\Sigma^3 + O(\Sigma^{3/2+\epsilon}), 
\end{equation} 
the asymptotics of $N_{3,1;\Lambda,\bfA},N_{3,2;\Lambda,\bfA},\cdots$ are suggestive and serve well to illustrate the oscillatory nature of the remainder. The results below are therefore complementary to estimates of the second moment
\begin{equation} 
\int_0^\Sigma |N_{3,0;\bbZ^3}(\sigma)-(4\pi/3)\sigma^3|^2 \dd \sigma 
\end{equation}
as $\Sigma\to\infty$, such as those found in \cite{lau}\cite{hulse}, which give no information regarding the sign of the discrepancy $N_{3,0;\bbZ^3}(\Sigma)-(4\pi/3)\Sigma^3$. 

A closely related notion to the iterated integrals above is that of ``spectral Riesz means.'' For $k\in \bbN$, the $k$th \emph{Riesz mean} of the counting function $N_{3;\Lambda,\bfA}$ is defined as 
\begin{equation}
	R^k_\Sigma N_{3;\Lambda,\bfA}(\Sigma) = k!\Sigma^{-k} N_{3,k;\Lambda,\bfA} (\Sigma) = \int_0^\Sigma \Big( 1 - \frac{\sigma}{\Sigma}\Big)^k \dd N_{3;\Lambda,\bfA}(\sigma) . 
\end{equation}
This is the $k$th Riesz mean of the eigenvalue counting function for the magnetic Schr\"odinger operator $\triangle + 2i \bfA\cdot \nabla + \lVert \bfA \rVert^2$ on the flat 3-torus $\bbT^3 = \bbR^3/\Lambda^*$, where $\triangle$ is the positive semidefinite Laplacian and $\bfA\cdot \nabla = A_x \partial_x+A_y\partial_y+A_z\partial_z$. 
It is a theorem of H\"ormander \cite{HormanderRiesz}\cite[\S5]{Hormander68} that the $k$th spectral Riesz mean for any closed Riemannian manifold has an asymptotic expansion in powers of $\Sigma$, with an $O(\Sigma^{d-k-1})$ error term. Thus, the $k$th iterated integral of the eigenvalue counting function has an asymptotic expansion in powers of $\Sigma$ with an $O(\Sigma^{d-1})$ error term. This error is of size $O(\Sigma^2)$ in the $d=3$ case of interest here. 
Instead, the analysis below results in an $O(\Sigma)$ error for $k\geq 2$.

In discussing Weyl remainders, it can be useful \cite{FullingEstrada} to identify the eigenvalue counting with a tempered distribution.
Specifically, letting $\calS(\bbR) \subset C^\infty(\bbR)$ denote the Fr\'echet space of Schwartz functions, we conflate the function $N_{d,k;\Lambda,\bfA}:\bbR\to \bbR$ with the mapping 
\begin{equation}
	\calS(\bbR)\ni \chi \mapsto \int_0^\infty \chi(\sigma) N_{d,k;\Lambda,\bfA}(\sigma) \dd \sigma \in \bbC. 
	\label{eq:misc_l3z}
\end{equation}
(It is straightforward to verify that \cref{eq:misc_l3z} is continuous and therefore defines a tempered distribution.)
One reason for doing so is that we will be working with the Fourier transform. Another is that  it trivializes matters of convergence when talking about infinite series; convergence holds (sometimes only) after ``smearing in $\Sigma$,'' i.e.\ in the weak topology generated by the functionals given by integration against the various Schwartz functions. 
Indeed, some of the formulas below are \textit{not} convergent in the ordinary sense, but they make sense when the left-hand and right-hand sides are interpreted distributionally.
This is not an obstacle to getting \emph{some} ordinarily convergent formula, as \Cref{rem:distributional} indicates, as we can just smear against a convenient choice of test function, but the smearing might obscure relevant features or otherwise overcomplicate the formulas. When convergence does hold in a stronger topology than that of $\calS'(\bbR)$, this can be proven as a secondary result. Thus, one advantage of this approach is that it separates the underlying harmonic analysis from secondary technical considerations regarding convergence.
 
We will work with two spaces of generalized functions:
\begin{itemize}
	\item $\dot{\calS}'(\bbR^{\geq 0}) \subset \calS'(\bbR)$ is the (linear and topological) subspace consisting of tempered distributions supported on $[0,\infty)=\smash{\bbR^{\geq 0}}$, that is the set of tempered distributions $u\in \calS'(\bbR)$ such that $u(\chi) = 0$ whenever the support of $\chi$ is disjoint from $[0,\infty)$. (In a much more interesting setting than the 1D setting considered here, Melrose \cite{Melrose} calls them ``supported distributions.'') We endow $\smash{\dot{\calS'}(\bbR^{\geq 0})}$ with the subspace topology induced by the inclusion \begin{equation} 
		\dot{\calS}'(\bbR^{\geq 0})\hookrightarrow \calS'(\bbR),
	\end{equation} 
	which makes $\dot{\calS}'(\bbR^{\geq 0})$ into an LCTVS. 
	\item $\calS'(\bbR^{\geq 0}) = \dot{\calS}(\bbR^{\geq 0})^*$ (LCTVS-dual), where  $\smash{\dot{\calS}(\bbR^{\geq 0}_\Sigma)}$ denotes the Fr\'echet space of  all smooth functions $\chi:[0,\infty)_\Sigma\to \bbC$ which vanish to infinite order at the origin and which decay rapidly as $\Sigma\to\infty$, along with all of their derivatives. 
	(Melrose calls the elements of $\calS'(\bbR^{\geq 0})$ ``extendable distributions.'')
\end{itemize}
Some facts about these spaces are reviewed in \S\ref{sec:app}. References for the theory of distributions include \cite{RS}\cite{Melrose}\cite{HAP}\cite{Dijk2013}.
For our purposes, it is not necessary to specify a collection of seminorms generating the topology of $\calS'(\bbR^{\geq 0})$ --- it suffices to note that $\calS'(\bbR^{\geq 0})$ is canonically identifiable with 
\begin{equation} 
	\dot{\calS}'(\bbR^{\geq 0}) / \bbC[\partial]\delta = \{ u \bmod \bbC[\partial]\delta : u \in \dot{\calS}'(\bbR^{\geq 0})\}, 
\end{equation} 
where $\bbC[\partial]\delta$ denotes the set of linear combinations of a Dirac $\delta$-function and its derivatives. Indeed, given any $u\in \smash{\dot{\calS}'(\bbR^{\geq 0})}$, so that $u$ is a map $u:\calS(\bbR)\to \bbC$, we can restrict $u$ to $\smash{\dot{\calS}(\bbR^{\geq 0})}$. This defines an onto linear map 
\begin{equation} 
	\dot{\calS}'(\bbR^{\geq 0})\to \calS'(\bbR^{\geq 0})
\end{equation} 
whose kernel consists precisely of $\bbC[\partial] \delta$. Thus, we have an induced invertible linear map 
\begin{equation} 
	\dot{\calS}'(\bbR^{\geq 0}) / \bbC[\partial]\delta \to \calS'(\bbR^{\geq 0}),
\end{equation} 
and it is a homeomorphism.
See \cite[Lemma 1.2]{Melrose} for the general case.

We can identify any element of $\smash{\cup_{K\in \bbR} \langle \Sigma \rangle^K L^1[0,\infty)_\Sigma}$
with an element of $\smash{\calS'(\bbR_\Sigma^{\geq 0})}$ in the same way as done with ordinary tempered distributions.

So, for each $d\in \bbN^+$, $k\in \bbN$, full rank $\Lambda\subset \bbR^d$, and $\bfA\in \bbR^d$, the functions $N_{d;\Lambda,\bfA}$ and $N_{3,k;\Lambda,\bfA}$ can be interpreted as elements of $\smash{\dot{\calS}'(\bbR^{\geq 0})}$ or $\calS'(\bbR^{\geq 0})$.
(A clash of notational conventions will lead to us being inconsistent with regards to factors of the Heaviside step function 
\begin{equation}
	\Theta:\bbR\to \{0,1\}, \quad \Theta(\sigma) = 1_{\sigma\geq 0}.
\end{equation} 
When considering $N_{3,k;\Lambda,\bfA}$ as a tempered distribution, we must write the step functions explicitly, but when we are working in $\smash{\dot{\calS}'(\bbR^{\geq 0})}$ we will typically omit them.) 

\begin{remark*}
	We follow the notational convention of Melrose in using overdots, e.g.\ `$\calS'(\bbR^{\geq 0})$' to denote the dual of $\dot{\calS}(\bbR^{\geq 0})$ and `$\dot{\calS}'(\bbR^{\geq 0})$' to denote the dual of $\calS(\bbR^{\geq 0})$. So, according to this convention, the `$'$' should not be read as dualization. 
	We will mostly drop the `$\bmod \,\bbC[\partial]\delta$' that should be written when specifying elements of $\dot{\calS}'(\bbR^{\geq 0}) / \bbC[\partial]\delta$. Hence, we will notationally conflate extendable distributions with supported distributions extending them.
	In addition, as discussed above, we will make ample use of conventional abuses of notation allowing the identification of locally integrable, polynomially growing functions with tempered distributions. 
	Regarding formal variables, when we write `$N_{3;\Lambda}(\Sigma)\in \calS'(\bbR_\Sigma)$' (or anything similar), we mean that $N_{3;\Lambda}$ is identifiable with an element of $\calS'(\bbR)$, with $\Sigma$ playing a formal role, notationally speaking. The (optional) subscripts just declare which variable is being integrated against where such disambiguation is felt to be useful.
\end{remark*}

In the latter half of our presentation, \S\ref{sec:as_part_1} and \S\ref{sec:as_part_2}, we turn to the toroidal half-wave trace. Because $N_{3;\Lambda,\bfA}(\Sigma)\notin L^1(\bbR_\Sigma)$, its Fourier transform must be defined in the sense of Schwartz;
explicitly, $\calF N_{3;\Lambda,\bfA}: \calS(\bbR)\to \bbC$ is  given by
\begin{equation}
	\calF N_{3;\Lambda,\bfA}( \chi) = \int_0^\infty N_{3;\Lambda,\bfA}(\Sigma) \calF \chi(\Sigma) \dd \Sigma
	\label{eq:FN3}
\end{equation} 	
for any $\chi \in \calS(\bbR)$, where our conventions in defining $\calF:\calS(\bbR)\to \calS(\bbR)$ are 
\begin{equation} 
	 \quad \calF \chi(\sigma) = \int_{-\infty}^{+\infty} e^{-i\tau \sigma} \chi(\tau) \dd \tau. 
\end{equation} 
(We will use $\tau$ to denote the Fourier dual variable to $\Sigma,\sigma$.)
\Cref{eq:FN3} is the Fourier transform with respect to $\Sigma$, as opposed to the coordinate $ \Sigma^2$, the latter of which would correspond to the heat kernel rather than the wave kernel. 
While $\calF N_{3;\Lambda,\bfA}$ is a well-defined tempered distribution, it is not a function. It has singularities, and we will keep track of them.
The Laplace transform of $N_{3;\Lambda,\bfA}(\Sigma)$ in $\Sigma^2$ (which is well-defined as an improper integral) is essentially a power of a Jacobi theta function, therefore an automorphic form after analytic continuation, and this entails that the Fourier transform of $N_{3;\Lambda,\bfA}(\Sigma)$ in $\Sigma^2$ (the distributional boundary value of a modular form at the real axis) has full singular support. It is a tempered distribution, but it is nowhere locally a function. 
In contrast, $\calF N_{3;\Lambda,\bfA}(\tau)$ has an isolated singularity at $\tau = 0$ and actually has a Laurent series there. It is a function except at a discrete set of times, around each of which it can be expanded in Laurent series:
\begin{theorem}
	\label{thm:main5}
	Setting $\mu_{3;\Lambda,\bfA}(\Sigma) = \sum_{{\bmSigma} \in \Lambda} \delta(\Sigma-|\bmSigma+\bfA|) \in \calS'(\bbR_\Sigma)$, the toroidal half-wave trace $\calF \mu_{3;\Lambda,\bfA}(\tau)\in \calS'(\bbR_\tau)$ is given by 
	\begin{equation}
		\calF \mu_{3;\Lambda,\bfA}(\tau) = \frac{8\pi i}{\operatorname{Covol}(\Lambda)}\Big[ \frac{\pi}{(\tau-i0)^3} + \tau \sum_{\bfk\in \Lambda^* \backslash \{\bf0\}}  \frac{ \cos(2\pi \bfA\cdot \bfk)}{((\tau-i0)^2-4\pi^2 |\bfk|^2 )^2} \Big],
		\label{eq:misc_000}
	\end{equation}
	where the sum is unconditionally convergent in $\calS'(\bbR_\tau)$. 
\end{theorem}
Integrating this using \Cref{prop:Fourier_integration}, we ge get an explicit formula 
\begin{equation}
	\calF N_{3;\Lambda,\bfA}(\tau) = \frac{8\pi^2 }{\operatorname{Covol}(\Lambda)} \frac{1}{(\tau-i0)^4} + \frac{8\pi  }{\operatorname{Covol}(\Lambda)} \sum_{\bfk\in \Lambda^* \backslash \{\bf0\}}  \frac{\cos( 2\pi\bfA\cdot \bfk)}{((\tau-i0)^2-4\pi^2 |\bfk|^2 )^2}
	\label{eq:misc_001}
\end{equation}
for $\calF N_{3;\Lambda,\bfA}$.

The half-wave trace on general compact Riemannian manifolds was studied by Chazarain \cite{Chazarain1974}, H\"ormander \cite{Hormander68}, and Duistermaat \& Guillemin \cite{DG}, and together they established the precise singularity structure of the half-wave trace of any closed Riemannian manifold. We will not use this general theory, since we can just compute $\calF N_{3;\Lambda,\bfA}(\tau)$ explicitly, but the singularity structure in \cref{eq:misc_000} is in accord with these more general results. 
That $\calF N_{3;\Lambda,\bfA}(\tau)$ has an isolated singularity (of known form) at the origin is useful, because it allows us to  separate the polynomial growth of $N_{3;\Lambda,\bfA}(\Sigma)$ as $\Sigma\to \infty$ (coming from the pole of the Fourier transform at the origin) and an oscillatory remainder, coming from the rest. From this perspective, the additional result needed to deduce \emph{sharp} asymptotics for the $N_{d,k;\Lambda,\bfA}$ (for sufficiently large $k$) is global quantitative control on the half-wave trace, with a polynomial rate of growth (where the meaning of ``sufficiently large'' is determined by the degree of the rate of growth).

Via the Poisson summation formula, we can compute out the oscillatory contribution to $N_{d,0;\Lambda,\bfA}$ exactly, for every $d\geq 1$: it is an infinite sum of Bessel functions, yielding a formula for $N_{d;\Lambda,\bfA}(\Sigma)$ generalizing (in a weak, distributional sense) the well-known formula for $N_{2;\bbZ^2}(\Sigma)$ --- cf.\ \cite[\S4.4]{IwaniecKowalski}. 
While we cannot estimate the size of this oscillatory term precisely (a problem familiar from the Gauss circle problem, which is still open, despite the analogous formula being known for a century), the situation changes upon integration in $\Sigma$, since upon passing to the Fourier transform this corresponds to a weighting by $1/\tau$, which improves decay (away from the origin) as $\tau \to \pm \infty$. Hence, after integrating, we can better estimate the oscillatory term -- elementarily too, at least in the $d=3$ case we restrict attention to. However, the division by $\tau$ makes more severe the singularity of the half-wave trace at the origin, adding new terms to the Laurent series there. Applying the inverse Fourier transform then yields new polynomial terms. The end result is:

\begin{figure}[t]
	\begin{center} 
		\includegraphics[width=\textwidth]{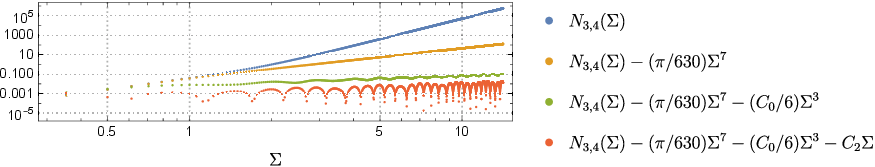}
	\end{center} 
	\caption{A \texttt{loglog} plot of $N_{3,4}(\Sigma) = N_{3,4;\bbZ^3,\bf0}(\Sigma)$ for $\Sigma \in \{\sqrt{\lambda/8}:\lambda\in \bbN,1\leq \lambda \leq 1600\}$, along with the results of subtracting from $N_{3,4}$ the non-oscillatory terms in \cref{eq:iterated_formula}. The last function plotted is just $o_4(\Sigma)=o_{4;\bbZ^3;\bf0}(\Sigma)$, as defined by \cref{eq:okmain}. The constants $C_0=C_{0;\bbZ^3,\bf0}$ and $C_2=C_{2;\bbZ^3,\bf0}$ were numerically approximated to within about $\smash{10^{-9}}$, and we ignore the resultant experimental error (of order approximately $10^{-6}$) introduced in computing the functions above.}
	\label{fig}
\end{figure}

\begin{theorem}
	\label{thm:main1}
	For each $k=0,1,2,3,\cdots$, 
	\begin{equation}
		N_{3,k;\Lambda,\bfA}(\Sigma) = \frac{8\pi}{(3+k)!} \frac{\Sigma^{3+k}}{\operatorname{Covol}(\Lambda)} + \sum_{m=0}^{k-1} \frac{C_{k-1-m;\Lambda,\bfA}}{m!} \Sigma^m + o_{k;\Lambda,\bfA}(\Sigma) 
		\label{eq:iterated_formula}
	\end{equation}
	as an element of $\dot{\calS}'(\bbR^{\geq 0})$, 	where $\{C_{j;\Lambda,\bfA}\}_{j=0}^\infty \subset \bbR$ is defined by (the absolutely convergent series)
	\begin{equation}
		C_{j;\Lambda,\bfA} = \frac{1}{\operatorname{Covol}(\Lambda)}
		\begin{cases}
			0 & (j \; \mathrm{ odd}), \\
			+(2j+4)(2\pi)^{-j-3} \sum_{\bfk\in \Lambda^*\backslash \{\bf0\}}\cos(2\pi \bfA\cdot \bfk)|\bfk|^{-4-j} & (j=0 \bmod 4), \\
			-(2j+4)(2\pi)^{-j-3} \sum_{\bfk\in \Lambda^* \backslash \{\bf0\}} \cos( 2\pi\bfA\cdot \bfk) |\bfk|^{-4-j} & (j=2 \bmod 4), 
		\end{cases}
		\label{eq:c35}
	\end{equation}
	and  $o_{k;\Lambda,\bfA} \in \dot{\calS}'(\bbR^{\geq 0})$ is given by 
	\begin{multline}
		o_{k;\Lambda,\bfA}(\Sigma) = - \frac{1}{\pi \operatorname{Covol}(\Lambda)}  \sum_{\bfk\in \Lambda^* \backslash \{\bf0\}} \frac{1}{|\bfk|^2} \frac{1}{(2\pi |\bfk|)^{k}} \cos(2\pi \bfA\cdot \bfk) \\
		\times \begin{cases}
			(-1)^{\frac{k}{2}}\;\;\,\big[\Sigma \cos(2\pi |\bfk| \Sigma) - \frac{k+1}{2\pi |\bfk| } \sin(2\pi |\bfk| \Sigma)\big] &(k\in 2\bbN)\\
			(-1)^{\lceil \frac{k}{2} \rceil}\big[\Sigma \sin(2\pi |\bfk| \Sigma) - \frac{k+1}{2\pi |\bfk|} \cos(2\pi |\bfk| \Sigma)\big] &(k\notin 2\bbN).
		\end{cases}
	\label{eq:okmain}
	\end{multline}
	The series in \cref{eq:okmain} is unconditionally summable in $\dot{\calS}'(\bbR^{\geq 0})$. 
	
	If $k\geq 2$, then \cref{eq:iterated_formula} holds in the ordinary sense, for each $\Sigma\geq 0$ (with the series in \cref{eq:okmain} ordinarily absolutely convergent). 
	Moreover, if $k\geq 2$, $o_{k;\Lambda,\bfA}:[0,\infty) \to \bbR$ is continuous. It vanishes at the origin, and, for any compact $K\subset \smash{\bbR^{\geq 0}}$, the series in \cref{eq:okmain}, restricted to $ K$, converges uniformly to it.
\end{theorem}
\begin{corollary}
	\label{thm:main2}
	As $\Sigma\to\infty$, 
	\begin{equation}
		N_{3,1;\Lambda,\bfA}(\Sigma) = \frac{\pi}{3} \frac{\Sigma^4}{\operatorname{Covol}(\Lambda)} + O_{\Lambda,\bfA}(\Sigma \log \Sigma) 
		\label{eq:n31as}
	\end{equation}
	holds. 
\end{corollary}
\begin{corollary}
	\label{thm:main3}
	For each $k\geq 2$, 
	\begin{equation}
		N_{3,k;\Lambda,\bfA}(\Sigma) = \frac{8\pi}{(3+k)!} \frac{\Sigma^{3+k}}{\operatorname{Covol}(\Lambda)} + \sum_{m=2}^{k-1} \frac{C_{k-1-m;\Lambda,\bfA}}{m!} \Sigma^m + O_{k,\Lambda,\bfA}(\Sigma) 
		\label{eq:iterated_asymptotics}
	\end{equation}
	for all $\Sigma\geq 0$. 
\end{corollary}
\noindent The subscripts on the big-O's denote that the bounds depend on the listed parameters in some unexamined way. Below, we will leave this dependence implicit. 

Finally, we check that there are no serious cancellations in \cref{eq:okmain}:
\begin{theorem}
	\label{thm:main4}
	For no integer  $k \geq 1$ does there exist an $\alpha=\alpha_{\bfA,\Lambda,k}\in \bbR$ and $\epsilon>0$ such that 
	\[
	N_{3,k;\Lambda,\bfA}(\Sigma) = \frac{8\pi}{(3+k)!} \frac{\Sigma^{3+k}}{\operatorname{Covol}(\Lambda)} +   \sum_{m=2}^{k-1} \frac{C_{k-1-m;\Lambda,\bfA}}{m!} \Sigma^m + \alpha \Sigma + O(\Sigma^{1-\epsilon})
	\]
	as $\Sigma\to\infty$. 
\end{theorem}

See \Cref{fig} for an illustration of \Cref{thm:main1}, \Cref{thm:main3}, \Cref{thm:main4} in the instructive case $k=4$, $\Lambda=\bbZ^3$, $\bfA=0$. The oscillatory character of $N_{3,4;\bbZ^3}(\Sigma) - (\pi/630) \Sigma^7 - (C_{0;\Lambda}/6)\Sigma^3-C_{2;\Lambda} \Sigma$ is clearly visible, as is the linear growth rate of the amplitudes in the terms of the oscillatory remainder.

The computation used below to prove the first of these results is very standard --- it is just a (somewhat nondirect) application of the Poisson summation formula, keeping track of the possible singularity at the origin. It can be understood in terms of the ``method of images'' for solving PDE on $\bbT^d$, but this just amounts to an application of the Poisson summation formula anyways. A brisk presentation of the core of the argument in the case $\Lambda=\bbZ^3$, $\bfA=0$ can be found in \cite[\S2]{Chamizo1995}\cite[\S4.4]{IwaniecKowalski}, and a presentation of van der Corput's version can be found in \cite[\S4.5.2]{Pinsky}. The case of a general lattice or nonzero shift does not require any new ideas. 
The results for $k>0$  require in our presentation a secondary argument in \S\ref{sec:exp} (though after some smoothing in $\Sigma$ this can be replaced by an appeal to the precise amount of regularity in $\bmSigma$ required in \cite[Lemma 2.1]{Chamizo1995}\cite[Corollary 4.8]{IwaniecKowalski}). 
The slightly technical point here is that in the traditional presentation of these sorts of computations, one wants to apply the Poisson summation formula to the function 
\begin{equation} 
	f_\Sigma(\bmSigma)=(\Sigma-|\bmSigma|)_+^k \in C^0(\bbR^d_{\bmSigma}),
\end{equation} 
which is not smooth and therefore not Schwartz. Once can appeal to \cite[Chp. VII, Cor. 2.6]{SteinWeiss}
in order to justify the application, but this requires first computing out $\calF f_\Sigma$. The computation of this Fourier transform is analogous to the computation in \S\ref{sec:exp}.

\Cref{thm:main2} is extracted from \Cref{thm:main1} in \S\ref{sec:as_part_1} via a very elementary smoothing argument. 

We elaborate briefly on how the results above give more precise information regarding spectral Riesz means on flat 3-tori than what is known for general manifolds.
H\"ormander's asymptotic expansion for the spectral Riesz means of $\bbT^3$ is contained in \Cref{thm:main2}, \Cref{thm:main3}. The asymptotic expansions in \Cref{thm:main2} and \Cref{thm:main3} are more precise, however --- besides being completely explicit about what the coefficients in H\"ormander's asymptotic expansion are, the error term is only of size $O(\Sigma)$ for $k\geq 2$ and $O(\Sigma \log \Sigma)$ for $k=1$, instead of $O(\Sigma^2)$. Moreover, for $k\geq 2$, \Cref{thm:main1} gives an absolutely convergent trigonometric sum for the error term, which is very particular to the torus. To boot, whereas it is typically not known when H\"ormander's $O(\Sigma^2)$ estimate is sharp, the estimate here is actually sharp, as \Cref{thm:main4} shows. We think that \Cref{thm:main3} is interesting in part as a sharp estimate of some spectral Riesz means, and to our knowledge this result is the first sharp estimate of higher Riesz means for some non-Zoll manifolds of dimension $>1$. Although we do not carry it out, an analogous computation to the one below gives sharp asymptotics for the ``off-diagonal'' spectral Riesz means of $\bbT^3$. See \cite{HormanderRiesz}\cite{Fulling} for more on spectral Riesz means in general.

Conceptually, the feature of $\bbT^3$  -- and flat tori more generally -- that allows proving this refined asymptotic expansion is the slow growth of the (say $2$-fold regularized) toroidal half-wave trace (as measured in an appropriate sense). Thus, while our proof is partly based on an explicit computation, a similar result will hold on any closed Riemannian manifold on which the half-wave trace grows polynomially with respect to an appropriate Sobolev-type norm measuring negative regularity. See \cite{sussman} for a related discussion of Weyl remainders themselves. 
The detailed analysis of the toroidal half-wave trace found in \S\ref{sec:as_part_2} is  difficult to locate in the literature (if it exists, it is unknown to the authors). We use the results in this section to prove \Cref{thm:main4}. We remark in closing that, even in the $\Lambda=\bbZ^3,\bfA=0$ case, the summation formulas \cite[Lemma 2.1]{Chamizo1995}\cite[Corollary 4.8]{IwaniecKowalski} do not suffice for the proof of \Cref{thm:main5} (they only determine $\calF\mu_{3;\Lambda,\bfA}$ modulo a polynomial), so it is not clear that one can avoid the additional technical details in the treatment below. 

\section{An explicit functional formula for $N_{d;\Lambda,\bfA}$}
\label{sec:core}

The Poisson summation formula for $\Lambda$ says that 
\begin{equation}
	\sum_{\bmSigma \in [\Lambda+\bfA]} f(\bmSigma) = \sum_{\bmSigma \in \Lambda}f_{\bfA}(\bmSigma) = \frac{1}{\operatorname{Covol}(\Lambda)}  \sum_{\bfk \in \Lambda^*}  \calF f_\bfA ( 2\pi \bfk) = \frac{1}{\operatorname{Covol}(\Lambda)}  \sum_{\bfk \in \Lambda^*}  e^{2\pi i \bfk \cdot \bfA} \calF f ( 2\pi \bfk)
	\label{eq:Poisson}
\end{equation}
for all $f\in \calS(\bbR^d)$, where $f_{\bfA}(\bmSigma)=f(\bmSigma+\bfA)$. 
In other words, both sums converge unconditionally -- trivially, in this case, because $f$ and $\calF f$ are Schwartz -- and their limits agree. 

Let $\bmSigma = (\sigma_1,\ldots,\sigma_d)$  and $r=(\sigma_1^2+\cdots+\sigma_d^2)^{1/2} = \lVert \bmSigma \rVert$. 

Suppose now that $f$ has the form $f(\sigma_1,\ldots,\sigma_d)=F(r)$  for some $F\in \calS(\bbR)$ whose odd-order derivatives all vanish at zero. For example, any 
\begin{equation} 
	F \in \dot{\calS} (\bbR^{\geq 0}) = \dot{C}^\infty(\bbR^{\geq 0})\cap \calS (\bbR)
	\label{eq:Fcon}
\end{equation} 
has this property, as does any Schwartz function that is constant in some neighborhood of the origin. 

For such $F$, $f$ is actually Schwartz, so the Poisson summation formula applies, yielding 
\begin{align}
	\sum_{\bmSigma \in [\Lambda+\bfA]} F(|\bmSigma|) &= \frac{1}{\operatorname{Covol}(\Lambda)} \sum_{\bfk \in \Lambda^*}  e^{2\pi i \bfk \cdot \bfA} \calF f ( 2\pi \bfk) \\ &= \frac{1}{\operatorname{Covol}(\Lambda)}  \sum_{\bfk \in \Lambda^*} e^{2\pi i \bfk \cdot \bfA} \int_{0}^\infty F(r) r^{d-1} \int_{\bbS^{d-1}} e^{-2\pi i r\bfk\cdot  \bmtheta}  \dd\!\operatorname{Area}_{\bbS^{d-1}} (\bmtheta) \dd r.  
	\label{eq:m53}
\end{align}
In the expression above is a standard form of the Bessel function of the first kind: 
\begin{equation}
	\int_{\bbS^{d-1}} e^{i \bfw \cdot \bmtheta } \dd\! \operatorname{Area}_{\bbS^{d-1}} (\bmtheta) = \begin{cases} 
		(2\pi)^{\nu + 1 } |\bfw|^{-\nu} J_\nu(|\bfw|)  & (\bfw \neq 0), \\
		\operatorname{Area}(\bbS^{d-1}) & (\bfw = 0), 
	\end{cases} 
	\label{eq:29g}
\end{equation}
where $\nu = d/2-1$.  See \cite[pg.\ 198]{GS}\cite[pg. 154]{SteinWeiss}.
Substituting this into \cref{eq:m53},  
\begin{multline}
	\sum_{\bmSigma \in [\Lambda+\bfA]} F(|\bmSigma|) =  \frac{A}{\operatorname{Covol}(\Lambda)} \int_0^\infty F(r) r^{d-1} \dd r \\ + \frac{1 }{\operatorname{Covol}(\Lambda)}  \sum_{\bfk \in \Lambda^* \backslash \{{\bf0}\}}  2\pi\cos(2\pi \bfk \cdot \bfA)\int_0^\infty F(r) r^{d/2} | \bfk|^{-\nu} J_\nu(2\pi r |\bfk|) \dd r,
	\label{eq:misc_l30}
\end{multline}
where $A=\operatorname{Area}(\bbS^{d-1})$
and we used the symmetry of $\Lambda^*$ to replace the $\exp(2\pi i \bfk\cdot \bfA)$ with $\cos(2\pi \bfk\cdot \bfA)$. 

Note that the formal series $\sum_{\bmSigma \in \Lambda} \delta(\Sigma -  |\bmSigma+\bfA|) \in \calS'(\bbR_\Sigma)^{\Lambda}$  is unconditionally summable to a tempered distribution (which is in fact a Borel measure), which we  denote $
\Delta(\Sigma)=\sum_{\bmSigma \in \Lambda} \delta(\Sigma - |\bmSigma+\bfA|)$. 
The left-hand side of \cref{eq:misc_l30} can therefore be written as follows:
\begin{equation}
	\sum_{\bmSigma \in [\Lambda+\bfA]} F(| \bmSigma|) = \int_{-\infty}^{+\infty} F(\Sigma) \sum_{\bmSigma \in \Lambda} \delta(\Sigma - |\bmSigma+\bfA|) \dd \Sigma = \Delta(F), 
\end{equation}
where the integral is formal. Similarly -- letting, for $\bfk \neq 0$, $\calJ_{\bfk} \in \calS'(\bbR)$ denote the tempered distribution $\chi \mapsto \int_{\bbR} \chi(\Sigma) \calJ_{\bfk}(\Sigma) \dd \Sigma$ given by integration against the function
\begin{equation}  
	\calJ_{\bfk}(\Sigma) = 
	\begin{cases} 
		2\pi \Sigma^{d/2} |\bfk|^{-\nu} J_\nu(2\pi |\bfk|\Sigma ) & (\Sigma>0) \\
		0 & (\Sigma \leq 0), 
	\end{cases} 
	\label{eq:Jdef}
\end{equation} 
(which we denote $\calJ_{\bfk}(\Sigma) = 2\pi  \Theta(\Sigma) 	\Sigma^{d/2} |\bfk|^{-\nu} J_\nu(2\pi \Sigma |\bfk|)$) -- the summand on the right-hand side of \cref{eq:misc_l30} is $\cos(2\pi i \bfk \cdot \bfA)\calJ_{\bfk}(F)$. 
(Note that the subscript `$\bfk$' in $\calJ_{\bfk}$ is not related to the subscript `$\nu$' in $J_\nu$.) 

The formal series $\smash{\sum_{\bfk \in \Lambda^*\backslash \{\bf0\}} \cos(2\pi  \bfk \cdot \bfA) \calJ_{\bfk} \in \calS'(\bbR)^{\Lambda\backslash \{\bf0\}}}$ is unconditionally summable in $\calS'(\bbR^{\geq 0})$, as an integration-by-parts argument shows. 
The identities above show that 
\begin{equation}
	\Delta(F)  = \frac{A}{\operatorname{Covol}(\Lambda)}  \int_0^\infty F(r) r^{d-1} \dd r + \frac{1}{\operatorname{Covol}(\Lambda)}  \Big[  \sum_{\bfk \in \Lambda^* \backslash  \{{\bf0}\}} \cos(2\pi \bfk \cdot \bfA) \calJ_{\bfk} \Big] (F),
	\label{eq:misc_7h5}
\end{equation}
for all $F$ as above. So, we have arrived at \cite[Theorem 4.6]{IwaniecKowalski}:
\begin{propositionp}
	\label{prop:dif}
	As elements of $\calS'(\bbR^{\geq 0}_\Sigma)$, $ \operatorname{Covol}(\Lambda) \Delta(\Sigma) = A \Sigma^{d-1}+ \sum_{\bfk \in \Lambda^*\backslash \{{\bf0}\}} \cos(2\pi  \bfk \cdot \bfA) \calJ_{\bfk}(\Sigma)$. 
\end{propositionp} 
We will need to pay attention to Dirac terms, but first we observe the following, which is suggested by formally integrating the identity in \Cref{prop:dif}:
\begin{proposition}
	\label{prop:if}
	As elements of $\calS'(\bbR^{\geq 0}_\Sigma)$, we have 
	\begin{align} N_{d;\Lambda,\bfA}(\Sigma) &= \Sigma^d \frac{\operatorname{Vol} \bbB^d}{\operatorname{Covol}(\Lambda)}  + \frac{\Sigma^{d/2}}{\operatorname{Covol}(\Lambda)} \sum_{\bfk \in \Lambda^*\backslash \{{\bf0}\}} \cos(2\pi  \bfk \cdot \bfA) |\bfk|^{-d/2} J_{d/2}(2\pi |\bfk| \Sigma),
	\label{eq:Nform} 
	\end{align}  
	where the sum is unconditionally convergent in $\calS'(\bbR^{\geq 0}_\Sigma)$.  
\end{proposition} 
\begin{proof}
	We only need to rigorously justify the formal integration. We first check that the left-hand and right-hand sides of \cref{eq:Nform}, \textit{considered as elements of $\calS'(\bbR^{\geq 0})$}, have the same derivative. We observe:
	\begin{enumerate}[label=(\Roman*)]
		\item $\partial N_{d;\Lambda,\bfA}(\Sigma) = \Delta(\Sigma)$ when both sides are considered as elements of $\calS'(\bbR)$, so this continues to hold when both sides are continued as elements of $\calS'(\bbR^{\geq 0})$, and 
		\item the sum  
		\begin{equation} 
			\Sigma^{d/2} \sum_{\bfk \in \Lambda^*\backslash \{{\bf0}\}}  \cos(2\pi  \bfk \cdot \bfA) |\bfk|^{-d/2} \Theta(\Sigma) J_{d/2}(2\pi |\bfk| \Sigma) \in \calS'(\bbR_\Sigma^{\geq 0})^{\Lambda^*\backslash \{\bf0\}}
			\label{eq:misc_kk}
		\end{equation} 
		is unconditionally convergent in $\calS'(\bbR^{\geq 0})$ by an integration-by-parts argument, and the derivative of the extendable distribution in \cref{eq:misc_kk} is $\smash{\sum_{\bfk \in \Lambda^*\backslash \{\bf0\}}  \cos(2\pi  \bfk \cdot \bfA) \calJ_{\bfk}}$. 
	\end{enumerate}
	So, indeed, the two sides of \cref{eq:Nform} have the same derivative in $\calS'(\bbR^{\geq 0})$. 
	This is equivalent to saying that, when interpreted as elements of $\smash{\dot{\calS}'(\bbR^{\geq 0})}$, they have the same derivative modulo $\bbC[\partial]\delta$.

	By \Cref{prop:mod_ker}, we conclude that, as an element of $\smash{\calS'(\bbR^{\geq 0}_\Sigma)}$, 
	\begin{equation}
		N_{d;\Lambda,\bfA}(\Sigma) = c+\Sigma^d \frac{\operatorname{Vol} \bbB^d}{\operatorname{Covol}(\Lambda)} + \frac{\Sigma^{d/2}}{\operatorname{Covol}(\Lambda)} \sum_{\bfk \in \Lambda^*\backslash \{{\bf0}\}} |\bfk|^{-d/2}  \cos(2\pi  \bfk \cdot \bfA) J_{d/2}(2\pi |\bfk| \Sigma)
		\label{eq:misc_nd4}
	\end{equation}
	for some constant $c = c_{\Lambda,\bfA}\in \bbC$, which we have to show is zero. (We are omitting the $\Theta(\Sigma)$ in \cref{eq:misc_nd4}.)
	
	For $\chi \in C_{\mathrm{c}}^\infty(\bbR)$ that is constant in a neighborhood of the origin, we calculate $N_{d;\Lambda,\bfA}(\partial \chi) = \int_0^\infty N_{d;\Lambda,\bfA}(\Sigma)\chi'(\Sigma) \dd \Sigma$ in two ways:
	\begin{itemize}
		\item 
		Since $\partial N_{d;\Lambda,\bfA}(\Sigma) = \Delta(\Sigma)$ as elements of $\calS(\bbR)$,  $N_{d;\Lambda,\bfA}(\partial \chi)=-\Delta(\chi)$.
		By \cref{eq:misc_l30}, 
		\begin{align}
			 -\Delta(\chi) &=- A \int_0^\infty \frac{\chi(\Sigma) \Sigma^{d-1} \dd \Sigma}{\operatorname{Covol}(\Lambda)} - 2\pi  \sum_{\bfk \in \Lambda^*\backslash \{\bf0\}}\frac{ \cos(2\pi  \bfk \cdot \bfA)}{|\bfk|^{\nu} \operatorname{Covol}(\Lambda)} \int_0^\infty \chi(\Sigma) \Sigma^{d/2} J_\nu(2\pi  |\bfk| \Sigma) \dd \Sigma \\
			&= \frac{\operatorname{Vol} \bbB^d}{\operatorname{Covol}(\Lambda)}  \int_0^\infty \chi'(\Sigma) \Sigma^{d} \dd \Sigma+ \sum_{\bfk \in \Lambda^*\backslash \{\bf0\}}^\infty \frac{ \cos(2\pi  \bfk \cdot \bfA)}{|\bfk|^{d/2} \operatorname{Covol}(\Lambda)} \int_0^\infty \chi'(\Sigma) \Sigma^{d/2} J_{d/2}(2\pi |\bfk| \Sigma) \dd \Sigma .
			\label{eq:misc_g11}
		\end{align}
		\item Applying \cref{eq:misc_nd4}, which we can do because $N_{d;\Lambda,\bfA}(\partial \chi) = N_{d;\Lambda,\bfA}(\Theta \partial \chi)$ and $\Theta \partial \chi \in \dot{\calS}(\bbR^{\geq 0})$, yields 
		\begin{multline}
			N_{d;\Lambda,\bfA}(\partial \chi) = - c \chi(0)+ \frac{\operatorname{Vol} \bbB^d}{\operatorname{Covol}(\Lambda)}  \int_0^\infty \chi'(\Sigma) \Sigma^{d} \dd \Sigma \\ + \sum_{\bfk \in \Lambda^*\backslash \{\bf0\}}^\infty \frac{ \cos(2\pi  \bfk \cdot \bfA)}{|\bfk|^{d/2} \operatorname{Covol}(\Lambda)} \int_0^\infty \chi'(\Sigma) \Sigma^{d/2} J_{d/2}(2\pi |\bfk| \Sigma) \dd \Sigma .
			\label{eq:misc_g42}
		\end{multline}
	\end{itemize}
	Comparing \cref{eq:misc_g11} and \cref{eq:misc_g42}, we deduce that $c\int_0^\infty \chi'(\Sigma) \dd \Sigma = -c \chi(0) =0$. Since we can choose $\chi$ as above that is nonzero at zero, we conclude that $c=0$. 
\end{proof}

\begin{remark}
	When $d=2$, $\sum_{n=1}^\infty r_2(n) n^{-1/2} J_1(2\pi \sqrt{n} \Sigma)$ converges (conditionally) for $\Sigma>0$ with $\Sigma^2 \notin \bbZ$ (as shown by Hardy).
	If $d\geq 3$ then there does not exist any $\Sigma>0$ such that the sum on the right-hand side of \cref{eq:Nform} converges in the ordinary sense, hence it must be interpreted ``distributionally:'' given any Schwartz function $\chi \in \calS(\bbR)$, 
	\begin{multline}
		\int_{0}^\infty \chi(\Sigma) N_{d;\Lambda}(\Sigma) \dd \Sigma = \frac{\operatorname{Vol} \bbB^{d}}{\operatorname{Covol}(\Lambda)}\int_0^\infty \chi(\Sigma) \Sigma^d \dd \Sigma  \\ + \frac{1}{\operatorname{Covol}(\Lambda)}\sum_{\bfk \in \Lambda^*\backslash \{\bf0\}} \frac{1}{|\bfk|^{d/2}} \int_0^\infty \chi(\Sigma) \Sigma^{d/2} J_{d/2}(2\pi |\bfk| \Sigma) \dd \Sigma, 
	\end{multline}
	where the sum on the right-hand side is absolutely convergent. For example, since $N_{d;\bbZ^d}$ is constant on intervals of the form $[\sqrt{n}, \sqrt{n+1})$ for $n\in \bbN$, choosing $\chi \in C_{\mathrm{c}}^\infty(\bbR)$ supported in $[\sqrt{n}, \sqrt{n+1})$ with $\int_\bbR \chi(\Sigma) \dd \Sigma = 1$, 
	\begin{equation}
		N_{d;\bbZ^d}(\sqrt{n}) = \operatorname{Vol} \bbB^{d}\int_0^\infty \chi(\Sigma) \Sigma^d \dd \Sigma  + \sum_{n=1}^\infty \frac{r_d(n)}{n^{d/4}} \int_0^\infty \chi(\Sigma) \Sigma^{d/2} J_{d/2}(2\pi \sqrt{n} \Sigma) \dd \Sigma,   
	\end{equation}
	where $r_d(n) = \{\bfk \in \bbZ^d : |\bfk| = n\}$. 
	\label{rem:distributional}
\end{remark}

Whenever $d \in 2\bbN+1$, the Bessel function $J_{d/2}(z)$ can be written as a linear combination of trigonometric functions whose coefficients are Laurent polynomials in $z$. For $d=3$, we have 
\begin{equation} 
	J_{3/2}(z) = \sqrt{\frac{2}{\pi z}}\Big(-\cos(z) + \frac{1}{z} \sin(z)\Big)
\end{equation} 
for all $z\in \bbC\backslash (-\infty,0]$. 
Substituting this in to \cref{eq:Nform}, we get: 
\begin{propositionp}
	\label{prop:k0case}
	As an element of $\calS'(\bbR^{\geq 0}_\Sigma)$, $N_{3;\Lambda,\bfA}(\Sigma)$ is given by
	\begin{equation}
		 \frac{4\pi}{3} \frac{\Sigma^3}{\operatorname{Covol}(\Lambda)}  - \frac{1}{\pi \operatorname{Covol}(\Lambda)} \sum_{\bfk \in \Lambda^*\backslash \{\bf0\}} \frac{1}{|\bfk|^2} \cos(2\pi \bfk\cdot \bfA)\Big[ \Sigma\cos(2\pi |\bfk| \Sigma) - \frac{1}{2\pi |\bfk|} \sin(2\pi |\bfk| \Sigma) \Big],
	\end{equation}
	where the sum on the right-hand side is (unconditionally) convergent in $\calS'(\bbR^{\geq 0}_\Sigma)$.
\end{propositionp}

\section{An explicit formula for $N_{3,k;\Lambda,\bfA}$, $k\geq 1$}
\label{sec:exp}

We now proceed to iterate the formal integration leading us from \Cref{prop:dif} to \Cref{prop:if}. We have not, however, yet proven that the series on the right-hand side of \cref{eq:Nform} converges unconditionally in $\dot{\calS}'(\bbR^{\geq 0})$, i.e.\ in $\calS'(\bbR)$ (after inserting $\Theta$'s) (we only noted that -- by integration-by-parts -- they converge unconditionally in $\calS'(\bbR^{\geq 0}) \cong \dot{\calS}'(\bbR^{\geq 0})/\bbC[\partial]\delta$). And, the computations in the previous section, leading up to \cref{eq:misc_l30}, were only done for Schwartz functions $F \in \calS(\bbR)$ that are the sum of an even function and an element of $\dot{\calS}(\bbR^{\geq 0})$.  For proving \Cref{prop:if}, this sufficed, but for repeated integrations we need to allow arbitrary Schwartz $F$. 

For each $k\in \bbN$, let $\partial^{-k} : \dot{\calS}'(\bbR^{\geq 0}) \to \dot{\calS}'(\bbR^{\geq 0})$ denote the well-defined set-theoretic inverse of $\partial^k$, i.e.\ the $k$th power of the operator $\partial^{-1}$ defined in \cref{eq:int}.  Heuristically, $\partial^{-k} \chi$ is given by $k$-fold iterated integrations of $\chi$ from $0$ and this holds literally when applied to integrable functions, and consequently 
\begin{equation} 
	\partial^{-k} N_{3;\Lambda,\bfA}(\Sigma) = N_{3,k;\Lambda,\bfA}(\Sigma),
	\label{eq:misc_zeh}
\end{equation} 
where both sides are interpreted as elements of $\smash{\dot{\calS}'(\bbR^{\geq 0})}$.

As a preliminary step in computing $N_{3,k;\Lambda,\bfA}$ via \cref{eq:misc_zeh}, we define $\{\tilde{N}_{3,k;\Lambda,\bfA} \}_{k\in \bbN}\subset  \calS'(\bbR^{\geq 0})$ such that 
\begin{itemize}
	\item $\tilde{N}_{3,0;\Lambda,\bfA} = N_{3,0;\Lambda,\bfA}$ \textit{in $\calS'(\bbR^{\geq 0})$}, 
	\item and 
	\begin{equation} 
		\tilde{N}_{3,0;\Lambda,\bfA}(\Sigma) =\partial^{k} \tilde{N}_{3,k;\Lambda,\bfA}(\Sigma),
		\label{eq:misc_zey}
	\end{equation} 
\end{itemize}
also in $\calS'(\bbR^{\geq 0})$. We will check that the formal series defining $\tilde{N}_{3,k;\Lambda,\bfA}$ converges unconditionally in $\smash{\dot{\calS}'(\bbR^{\geq 0})}$ for $k\geq 2$, and by differentiating we will conclude the same for $k=0,1$, hence essentially proving that the series on the right-hand side of \Cref{prop:dif} converges absolutely in some negative regularity Sobolev space. 

This will allow us to interpret \cref{eq:misc_zey} as an identity in $\dot{\calS}'(\bbR^{\geq 0})$. 

Given now that $\tilde{N}_{3,0;\Lambda,\bfA}$ is a well-defined element of $\dot{\calS}'(\bbR^{\geq 0})$, the fact that $\tilde{N}_{3,0;\Lambda,\bfA} = N_{3,0;\Lambda,\bfA}$ in $\calS'(\bbR^{\geq 0})$  means that 
\begin{equation}
	\tilde{N}_{3,0;\Lambda,\bfA}(\Sigma) = P(\partial) \delta(\Sigma) + N_{3,0;\Lambda,\bfA}(\Sigma) 
\end{equation}
in $\dot{\calS}(\bbR^{\geq 0}_\Sigma)$ for some polynomial $P=P_{\Lambda,\bfA}$. 
\Cref{eq:misc_zey} then determines $\tilde{N}_{3,k;\Lambda,\bfA}$ in terms of $N_{3,0;\Lambda,\bfA}$ modulo polynomials and derivatives of Dirac $\delta$-functions (the results of integrating $\bbC[\partial]\delta$). 

It will turn out that we defined $\tilde{N}_{3,k;\Lambda,\bfA}$ so that, as elements of $\dot{\calS}'(\bbR^{\geq 0})$, 
\begin{equation} 
	\tilde{N}_{3,k;\Lambda,\bfA} = N_{3,k;\Lambda,\bfA}, 
\end{equation}	
meaning that $P=0$. This will be proven by checking that 
$\lim_{\Sigma\to 0^+} \tilde{N}_{3,k;\Lambda,\bfA}(\Sigma) = 0$
for all $k\geq 1$. 

This second statement will be clear for all $k\geq 2$, since $\smash{\tilde{N}_{3,k;\Lambda,\bfA}}$ will be a continuous function on all of $\bbR$. The $k=1$ case is a little more delicate, but the potential logarithmic singularity is not severe enough to cause trouble. 

\subsection{Formal Integration}
Set 
\begin{multline}
	\tilde{N}_{3,k;\Lambda,\bfA}(\Sigma)  = \frac{8\pi}{(3+k)!} \frac{\Sigma^{3+k}}{\operatorname{Covol}(\Lambda)} \Theta(\Sigma) \\ -\frac{1}{\pi \operatorname{Covol}(\Lambda)}   \sum_{\bfk\in \Lambda^*\backslash\{\bf0\}} \frac{1}{|\bfk|^2} \cos(2\pi\bfk\cdot \bfA) \partial^{-k} \Big[ \Sigma \cos(2\pi |\bfk| \Sigma) \Theta(\Sigma) - \frac{1}{2\pi |\bfk|} \sin(2\pi |\bfk| \Sigma) \Theta(\Sigma) \Big].
	\label{eq:Ntilde_def}
\end{multline}
The series is unconditionally summable in $\smash{\calS'(\bbR^{\geq 0})}$, so $\tilde{N}_{3,k;\Lambda,\bfA}$ is a well-defined element of $\calS'(\bbR^{\geq 0})$.

All of the computations in this section will be done in $\calS'(\bbR^{\geq 0})$. 

The  function 
\begin{equation} 
	\partial^{-k} [ \Sigma \cos(2\pi |\bfk| \Sigma) \Theta(\Sigma) - (2\pi |\bfk|)^{-1} \sin(2\pi |\bfk| \Sigma) \Theta(\Sigma)]
\end{equation} 
is a $k$-fold iterated definite integral of $\sigma \cos(2\pi |\bfk| \sigma) - (2\pi |\bfk|)^{-1} \sin(2\pi |\bfk| \sigma)$ along $\sigma \in [0,\Sigma]$.

Let $\Sigma_{\bfk} = 2\pi |\bfk| \Sigma$, so that $\partial_\Sigma = 2\pi |\bfk| \partial_{\Sigma_{\bfk}}$. Then, $\partial_\Sigma^{-k} = (2\pi)^{-k} |\bfk|^{-k} \partial_{\Sigma_{\bfk}}^{-k}$, so 
\begin{align}
	\partial^{-k} \Big[ \Sigma \cos( \Sigma_{\bfk}) - \frac{1}{2\pi |\bfk|} \sin(\Sigma_{\bfk})\Big] = (2\pi)^{-(k+1)} |\bfk|^{-(k+1)}  \partial^{-k}_{\Sigma_{\bfk}} [  \Sigma_{\bfk} \cos(\Sigma_{\bfk}) -  \sin(\Sigma_{\bfk})]. 
	\label{eq:3lg}
\end{align}
It can be seen inductively that there exist $\alpha_k,\beta_k,\gamma_k,\delta_k \in \bbR$ and polynomials $Q_k\in \bbR[\Sigma]$ such that 
\begin{equation}
	\partial^{-k}_{\Sigma} [  \Sigma \cos(\Sigma) -  \sin(\Sigma)] = Q_k(\Sigma) + \alpha_k \Sigma \cos \Sigma + \beta_k \Sigma \sin \Sigma + \gamma_k \cos \Sigma + \delta_k \sin \Sigma.
\end{equation}
Plugging this into the computation above, 
\begin{multline}
	\tilde{N}_{3,k;\Lambda,\bfA}(\Sigma) = \frac{8\pi}{(3+k)!} \frac{\Sigma^{3+k}}{\operatorname{Covol}(\Lambda)}  - \frac{1}{\pi \operatorname{Covol}(\Lambda)} \sum_{\bfk\in \Lambda^*\backslash \{\bf0\}} \frac{1}{|\bfk|^2} \frac{1}{(2\pi |\bfk|)^{k+1}} \cos(2\pi \bfk\cdot \bfA)\\ \times [ Q_k(\Sigma_{\bfk}) + \alpha_k \Sigma_{\bfk} \cos \Sigma_{\bfk}  + \beta_k \Sigma_{\bfk} \sin \Sigma_{\bfk} + \gamma_k \cos \Sigma_{\bfk} + \delta_k \sin \Sigma_{\bfk}] ,
	\label{eq:k45}
\end{multline}
where we are now omitting the factors of $\Theta(\Sigma)$, both sides being interpreted as elements of $\calS'(\bbR^{\geq 0})$. 

So, in order to compute $\tilde{N}_{3,k;\Lambda,\bfA}$ for $k\geq 1$, all that needs to be done is compute $\alpha_k,\beta_k,\gamma_k,\delta_k$, and $Q_k$.

Set $\alpha_0 = 1, \beta_0 = 0, \gamma_0 = 0, \delta_0 = -1$.  Since $\alpha_k$, $\beta_k$, $\gamma_k$, $\delta_k$ can be defined in terms of $\alpha_{k-1},\beta_{k-1},\gamma_{k-1},\delta_{k-1}$ via a system of linear equations whose coefficients do not depend on $k$, 
\begin{equation}
	\begin{pmatrix}
		\alpha_k \\ \beta_k \\ \gamma_k \\ \delta_k 
	\end{pmatrix} = 
	\begin{pmatrix}
		m_{11} & m_{12} & m_{13} & m_{14} \\
		m_{21} & m_{22} & m_{23} & m_{24} \\
		m_{31} & m_{32} & m_{33} & m_{34} \\
		m_{41} & m_{42} & m_{43} & m_{44} \\
	\end{pmatrix}^k \begin{pmatrix}
		\alpha_0 \\ \beta_0 \\ \gamma_0 \\ \delta_0
	\end{pmatrix}
	\label{eq:ind}
\end{equation}
for some matrix $M=\{m_{ij}\}_{i,j=1}^4$ with entries $m_{ij} \in \bbR$. From
\begin{equation}
	\int_0^\Sigma \sigma \cos(\sigma) \dd \sigma = -1 + \cos(\Sigma) + \Sigma \sin (\Sigma), \quad 
	\int_0^\Sigma \sigma \sin(\sigma) \dd \sigma = - \Sigma \cos(\Sigma) + \sin(\Sigma),
\end{equation}
along with $\int_0^\Sigma \cos(\sigma) \dd \sigma =  \sin(\Sigma)$ and $\int_0^\Sigma \sin(\sigma) \dd \sigma =  1 - \cos(\Sigma)$, we conclude that the only nonzero matrix entries are $m_{12} = -1$, $m_{21} = +1$, $m_{31} = +1$, $m_{34} = -1$, $m_{42}=+1$, and $m_{43}=+1$.
That is, 
\begin{equation}
	M = 
	\begin{pmatrix}
		0 & -1 & 0 & 0 \\
		+1 & 0 & 0 & 0 \\
		+1 & 0 & 0 & -1 \\
		0 & +1 & +1 & 0 \\
	\end{pmatrix} = 
	\begin{pmatrix}
		J & 0 \\
		1 & J
	\end{pmatrix},
\end{equation}
where $J \in \bbR^{2\times 2}$ is the matrix corresponding to a clockwise $90^\circ$ rotation of $\bbR^2$. 
We inductively  deduce that, for all $k\in \bbN$,  
\begin{equation} 
	M^k = \begin{pmatrix}
		J^{k} & 0 \\
		kJ^{k-1} & J^{k}
	\end{pmatrix}.
\end{equation} 
We have $J^k = 1$ if $k=0 \bmod 4$, $J^k = J$ if $k=1 \bmod 4$, $J^k = -1$ if $k=2 \bmod 4$, and $J^k = -J$ if $k=3 \bmod 4$; so, $M^k$ is given by 
\begin{equation}
	\begin{pmatrix}
		0 & -1 & 0 & 0 \\
		+1 & 0 & 0 & 0 \\
		+k & 0 & 0 & -1 \\
		0 & +k & +1 & 0 \\
	\end{pmatrix}, \; 
	\begin{pmatrix}
		-1 & 0 & 0 & 0 \\
		0 & -1 & 0 & 0 \\
		0 & -k & -1 & 0 \\
		+k & 0 & 0 & -1 \\
	\end{pmatrix}, \; \begin{pmatrix}
	0 & +1 & 0 & 0 \\
	-1 & 0 & 0 & 0 \\
	-k & 0 & 0 & +1 \\
	0 & -k & -1 & 0 \\
\end{pmatrix}, \;
\begin{pmatrix}
+1 & 0 & 0 & 0 \\
0 & +1 & 0 & 0 \\
0 & +k & +1 & 0 \\
-k & 0 & 0 & +1 \\
\end{pmatrix}
\end{equation}
in the four cases $k=1,2,3,4\bmod 4$ respectively. 

Using these in \cref{eq:ind}, we have $\alpha_k = \Re [i^k]$, $\beta_k = -\Im[i^k]$, $\gamma_k =(k+1) \Im [i^k]$, $\delta_k = - (k+1) \Re [i^k]$. 
From the computations above, we also see that 
\begin{equation}
	Q_k(\Sigma) = \int_0^\Sigma Q_{k-1}(\sigma)\dd \sigma - \alpha_{k-1} + \delta_{k-1} =  \int_0^\Sigma Q_{k-1}(\sigma)\dd \sigma -  (k+1)\Im[i^{k}].
\end{equation}
The $j$th component of this, $[Q_k]_j$, is given by $(j!)^{-1} \partial^j Q_k(0) = (j!)^{-1}Q_{k-j}(0) = - (k-j+1) \Im[i^{k-j}] / j!$ if $k-j \geq 1$ and $0$ otherwise. So, 
\begin{equation} 
	Q_k(\Sigma) = \sum_{j=0}^{k-1} (k-j+1) \Im[i^{k-j+2}]\frac{\Sigma^j}{j!}.
\end{equation} 
We therefore conclude that, for all $k\in \bbN$, 
\begin{multline}
	\tilde{N}_{3,k;\Lambda,\bfA}(\Sigma) = \frac{8\pi}{(3+k)!} \frac{\Sigma^{3+k}}{\operatorname{Covol}(\Lambda)}   -  \frac{1}{\pi \operatorname{Covol}(\Lambda)} \sum_{\bfk \in \Lambda^*\backslash \{\bf0\}} \frac{1}{|\bfk|^2} \frac{1}{(2\pi |\bfk| )^{k+1}} \cos(2\pi \bfk\cdot \bfA)\\ \Big[     2\pi|\bfk| \Re [i^k] \Sigma \cos (2\pi|\bfk|\Sigma)  - 2\pi|\bfk| \Im[i^k]\Sigma \sin (2\pi |\bfk| \Sigma)  + (k+1) \Im [i^k] \cos  (2\pi |\bfk| \Sigma) \\ - (k+1) \Re [i^k] \sin  (2\pi |\bfk| \Sigma)  
	 +\sum_{j=0}^{k-1} \frac{k-j+1}{j!} (2\pi|\bfk| \Sigma )^j \Im[i^{k-j+2}]  \Big] .
	\label{eq:misc_kj1}
\end{multline}
We may pull out the polynomial part of the sum and write (adding back in the factors of $\Theta$ for later reference) 
\begin{multline}
	\tilde{N}_{3,k;\Lambda,\bfA}(\Sigma) = \frac{8\pi}{(3+k)!} \frac{\Sigma^{3+k}}{\operatorname{Covol}(\Lambda)} \Theta(\Sigma) \\ + \frac{\Theta(\Sigma)}{\pi \operatorname{Covol}(\Lambda)} \sum_{j=0}^{k-1} \frac{k-j+1}{j!}  \frac{\cos(2\pi \bfk\cdot \bfA)}{(2\pi)^{k-j+1}} \Im[i^{k-j}] \Big( \sum_{\bfk \in \Lambda^*\backslash \{\bf0\}} \frac{1}{|\bfk|^{k-j+3}} \Big) \Sigma^j  + o_{k;\Lambda,\bfA}(\Sigma) , 
	\label{eq:misc_43h}
\end{multline} 
for 
\begin{multline}
	o_{k;\Lambda,\bfA}(\Sigma) = 
	- \frac{\Theta(\Sigma)}{\pi \operatorname{Covol}(\Lambda)} \sum_{\bfk\in \Lambda^* \backslash \{\bf0\} } \frac{1}{|\bfk|^2} \frac{\cos(2\pi \bfk\cdot \bfA)}{(2\pi |\bfk| )^{k}} \Big[ \Re [i^k] \Sigma \cos (2\pi|\bfk|\Sigma)  -  \Im[i^k]\Sigma \sin (2\pi |\bfk| \Sigma) \\ + \frac{k+1}{2\pi |\bfk|} \Im [i^k] \cos  (2\pi |\bfk| \Sigma) - \frac{k+1}{2\pi |\bfk|} \Re [i^k] \sin  (2\pi |\bfk| \Sigma) \Big].
	\label{eq:ok}
\end{multline} 

By comparing $o_{k;\Lambda,\bfA}$ with a volume integral:
\begin{lemmap}
	\label{lem:key}
	Let $k\geq 2$. 
	Given any compact $K\subseteq [0,\infty)$, the series on the right-hand side of \cref{eq:misc_kj1} is uniformly convergent in $K$, so $\smash{\tilde{N}_{3,k;\Lambda,\bfA}}$, as defined by \cref{eq:misc_43h}, can be considered as an element of $C^0[0,\infty)$. 
\end{lemmap}
Furthermore,  $\tilde{N}_{3,k;\Lambda,\bfA}(0) = 0$ for all $k\geq 2$, so $\tilde{N}_{3,k;\Lambda,\bfA}$ can be considered as an element of $C^0(\bbR)$.
Similar statements apply to $o_{k;\Lambda,\bfA}$.  

\subsection{Absence of Dirac Terms}

\begin{proposition}
	For each $k\geq 0$, the formal series 
	\begin{multline} 
		o_{k;\Lambda,\bfA}(\Sigma) = 
		- \frac{1}{\pi \operatorname{Covol}(\Lambda)} \sum_{\bfk\in \Lambda^*\backslash \{\bf0\}} \frac{1}{|\bfk|^2} \frac{\Theta(\Sigma)}{(2\pi |\bfk|)^{k}} \cos(2\pi \bfk\cdot \bfA)  \Big[ \Re [i^k] \Sigma \cos (2\pi |\bfk|\Sigma)  \\ -  \Im[i^k]\Sigma \sin (2\pi |\bfk| \Sigma) + \frac{k+1}{2\pi |\bfk|} \Im [i^k] \cos  (2\pi |\bfk| \Sigma) - \frac{k+1}{2\pi |\bfk| } \Re [i^k] \sin  (2\pi |\bfk| \Sigma) \Big]
		\label{eq:ok_main}
	\end{multline} 
is unconditionally summable in $\calS'(\bbR)$, hence \cref{eq:misc_43h} defines an element of $\calS'(\bbR)$. 
\end{proposition}
\begin{proof}
	Given \Cref{lem:key}, the proposition holds for $k\geq 2$. We now use this to deduce the $k=0,1$ cases. For $k\in \bbN^+$,  
	\begin{equation}
		\partial o_{k;\Lambda,\bfA}(\Sigma)  = o_{k-1;\Lambda,\bfA}(\Sigma) - \frac{1}{\pi \operatorname{Covol(\Lambda)}} \sum_{\bfk\in \Lambda^*\backslash \{\bf0\}} \frac{1}{|\bfk|^2} \frac{k+1}{(2\pi |\bfk|)^{k+1}} \cos(2\pi \bfk\cdot \bfA) \Im [i^k] \delta(\Sigma)
		\label{eq:misc_34h}
	\end{equation}
	at the level of formal series in $\dot{\calS}'(\bbR^{\geq 0})$. 
	
	For $k=2$, the second formal series in \cref{eq:misc_34h} is just zero, from which it follows that the series in \cref{eq:ok_main} is unconditionally summable in $\calS'(\bbR)$ for $k=1$ and that the derivative of \cref{eq:ok_main}, considered as an element of $\calS'(\bbR)$, is equal to the sum. 
	
	For $k=1$, the second formal series is absolutely convergent, from which it follows that the series in \cref{eq:ok_main} is unconditionally summable in $\calS'(\bbR)$ for $k=0$ and that the derivative of \cref{eq:ok_main} as an element of $\calS'(\bbR)$ is equal to the sum.
\end{proof}

The preceding proposition shows that \cref{eq:ok_main} defines, for each $k\in \bbN$, a tempered distribution, and moreover that \cref{eq:misc_34h} holds in the usual distributional sense.  
Manifestly,
\begin{equation}
	o_{k;\Lambda,\bfA}(\Sigma), \tilde{N}_{3,k;\Lambda,\bfA}(\Sigma) \in \dot{\calS}'(\bbR^{\geq 0})
\end{equation}
In other words, the given equation is an unconditionally convergent series in $\dot{\calS}'(\bbR^{\geq 0})$.

Reversing the formal integration that led us to \cref{eq:misc_43h},
\begin{equation} 
	\partial \tilde{N}_{3,k;\Lambda,\bfA} = \tilde{N}_{3,k-1;\Lambda,\bfA} 
\end{equation} 
in $\calS'(\bbR)$ for $k\in \bbN^+$. Let $\tilde{\Delta} = \partial \tilde{N}_{3,0;\Lambda,\bfA} \in \calS'(\bbR)$.  
This is given by 
\begin{equation}
	\tilde{\Delta}(\Sigma) = \frac{A \Sigma^{d-1}}{\operatorname{Covol}(\Lambda)} \Theta(\Sigma) + \frac{1}{\operatorname{Covol}(\Lambda)} \sum_{\bfk \in \Lambda^*\backslash \{{\bf0}\}} \cos(2\pi  \bfk \cdot \bfA) \calJ_{\bfk}(\Sigma),
\end{equation}
where the sum is unconditionally summable in $\calS'(\bbR)$. 
\begin{proposition}
	\label{prop:Delta_full}
	As an element of $\calS'(\bbR)$, 
	\begin{equation}
		\Delta(\Sigma) = \frac{4\pi \Sigma^2}{\operatorname{Covol}(\Lambda)} \Theta(\Sigma) + P(\partial)\delta(\Sigma) +  \frac{1}{\operatorname{Covol}(\Lambda)} \sum_{\bfk \in \Lambda^*\backslash \{\bf0\}} \cos(2\pi \bfk\cdot \bfA) \calJ_{\bfk}(\Sigma) 
		\label{eq:misc_klz}
	\end{equation}
	for some polynomial $P=P_{\Lambda,\bfA}$ whose even order terms are all zero. 
\end{proposition} 
\begin{proof}
	Consider the support of $E(\Sigma) = \Delta(\Sigma) - \tilde{\Delta}(\Sigma)$. 
	\begin{itemize}
		\item By \Cref{prop:dif}, $E$
		is supported on $(-\infty,0]$. 
		\item Since both $\Delta$ and $\tilde{\Delta}$ are supported on $[0,\infty)$, the same is true for $E$.
	\end{itemize}
	So, $E$ is supported at the origin.
	Since the only distributions supported on points are linear combinations of $\delta$-functions and derivatives thereof \cite[Theorem 5.5]{Dijk2013}, \cref{eq:misc_klz} holds for some polynomial $P$. 
	
	Given Schwartz $F\in \calS(\bbR)$, 
	\begin{multline}
		 \Delta(F) = \frac{4\pi}{\operatorname{Covol}(\Lambda)} \int_0^\infty \Sigma^2 F(\Sigma) \dd \Sigma + \sum_{j=0}^\infty [P]_j (-1)^j F^{(j)}(0) 
		\\ + \frac{1}{\operatorname{Covol}(\Lambda)}  \sum_{\bfk \in \Lambda^* \backslash \{\bf0\}} \int_0^\infty F(\Sigma) \cos(2\pi \bfk\cdot \bfA)\calJ_{\bfk}(\Sigma)  \dd \Sigma . 
	\end{multline}
	Applying this for arbitrary even $F$ and comparing with \cref{eq:misc_l30}, which held for any even $F \in \calS(\bbR)$, we conclude that the even order terms of $P$ all vanish. 
\end{proof}

Put differently, $\Delta(\Sigma) = P(\partial)\delta(\Sigma) + \tilde{\Delta}(\Sigma)$. Since integrals in $\dot{\calS}'(\bbR^{\geq 0})$ are unique, 
\begin{equation}
	N_{3,k;\Lambda,\bfA}(\Sigma) =
	\sum_{j=0}^k [P]_j  \frac{\Sigma^{k-j}}{(k-j)!} \Theta(\Sigma) + \sum_{j=k+1}^\infty [P]_j \delta^{(j-k-1)}(\Sigma) + \tilde{N}_{3,k;\Lambda,\bfA}(\Sigma)
	\label{eq:misc_zzz}
\end{equation} 
for each $k\geq 1$. By \Cref{lem:key}, $\tilde{N}_{3,k;\Lambda,\bfA}(\Sigma)$ is continuous for $k\geq 2$. Obviously, the same holds for $N_{3,k;\Lambda,\bfA}$ for $k\geq 2$, so \cref{eq:misc_zzz} forces that $[P]_2,[P]_3,[P]_4,\cdots$ are all zero (though we already knew that $[P]_2,[P]_4,\cdots$ were all zero). By the previous proposition, $[P]_0$ as well, so 
\begin{equation} 
	P(\partial) = C \partial
	\label{eq:misc_cpq}
\end{equation} 
for some $C\in \bbC$. 

\begin{proposition}
	The coefficient $C$ in \cref{eq:misc_cpq} is equal to $0$. 
\end{proposition}
\begin{proof}
	We examine \cref{eq:misc_zzz} for $k=1$. Taking into account the vanishing of $[P]_j$ for $j\neq 1$, this says that $\smash{N_{3,1;\Lambda,\bfA}(\Sigma) = C \Theta(\Sigma) + \tilde{N}_{3,1;\Lambda,\bfA}}$.  
	Since $C \Theta : \bbR\to \bbC$ is a piecewise continuous function and $N_{3,1;\Lambda,\bfA}:\bbR\to \bbR$ is a continuous function, the difference $\smash{\tilde{N}_{3,1;\Lambda,\bfA} = N_{3,1;\Lambda,\bfA} - C \Theta}$ is the distribution corresponding to a piecewise continuous function as well. 
	Then,  $\tilde{N}_{3,1;\Lambda,\bfA}(\Sigma) = N_{3,1;\Lambda,\bfA}(\Sigma) - C \Theta(\Sigma)$ holds in the ordinary sense for all $\Sigma\neq 0$, and we can write
	\begin{equation} 
		\lim_{\Sigma\to 0^+} \tilde{N}_{3,1;\Lambda,\bfA}(\Sigma) = \lim_{\Sigma\to 0^+} N_{3,1;\Lambda,\bfA}(\Sigma) - C = -C. 
	\end{equation} 
	Given any $\chi \in C_{\mathrm{c}}^\infty(\bbR)$, from the unconditional convergence, in $\calS'(\bbR)$, of the series defining $\tilde{N}_{3,1;\Lambda,\bfA}$, 
	\begin{multline}
		\operatorname{Covol}(\Lambda)\int_0^\infty  \chi(\Sigma) \tilde{N}_{3,1;\Lambda,\bfA}(\Sigma) \dd \Sigma = \frac{\pi}{3}  \int_0^\infty \chi(\Sigma) \Sigma^4 \dd \Sigma \\ + \frac{1}{2\pi^3} \Big[ \sum_{\bfk\in \Lambda^*\backslash \{\bf0\}} \frac{\cos(2\pi \bfk\cdot \bfA)}{|\bfk|^4} \Big] \int_0^\infty \chi(\Sigma) \dd \Sigma \\ 
		+ \frac{1}{2\pi^2} \sum_{\bfk\in \Lambda^*\backslash \{\bf0\}} \frac{1}{|\bfk|^{3}} \cos(2\pi \bfk\cdot \bfA)  \int_0^\infty \chi(\Sigma)  \Big[ \Sigma \sin(2\pi |\bfk| \Sigma) - \frac{1}{\pi |\bfk|} \cos(2\pi |\bfk| \Sigma)  \Big] \dd \Sigma. 		
		\label{eq:misc_hhh}
	\end{multline}
	Now suppose that $\chi$ is supported in $(0,+\varepsilon)$, $0<\varepsilon<1$.  
	In the following computations, we use big-$O$ notation with bounds independent of $\chi,\varepsilon$, and parameters $r,R$ (introduced below). We have the following bounds:
	\begin{itemize}
		\item The first is $\int_0^\infty \chi(\Sigma) \Sigma^4 \dd \Sigma = O(\varepsilon^4 \lVert \chi \rVert_{L^1})$. 
		\item For any $r>1$, 
		\begin{equation}
			\sum_{\bfk\in \Lambda^*\backslash \{{\bf0}\}, |\bfk| \leq r} \frac{1}{|\bfk|^{3}}\cos(2\pi \bfk\cdot \bfA) \int_0^\infty \chi(\Sigma) \Sigma \sin(2\pi |\bfk| \Sigma) \dd \Sigma   = O(\varepsilon \lVert \chi \rVert_{L^1} \log (1+r)).
		\end{equation}
		\item For any $r>1$, 
		\begin{multline}
			\sum_{\bfk \in \Lambda^*, |\bfk|>r} \frac{1}{|\bfk|^{3}}  \cos(2\pi \bfk \cdot \bfA)\int_0^\infty \chi(\Sigma) \Sigma \sin(2\pi |\bfk| \Sigma) \dd \Sigma \\ = \sum_{|\bfk|>r} \frac{1}{2\pi |\bfk|^4} \cos(2\pi \bfk \cdot \bfA) \int_0^\infty [\chi(\Sigma) + \Sigma \chi'(\Sigma)]  \cos(2\pi |\bfk| \Sigma) \dd \Sigma 
			\\ = O(r^{-1} (\lVert \chi \rVert_{L^1} + \varepsilon \lVert \chi' \rVert_{L^1})).
		\end{multline}
		\item Since $|1-\cos(2\pi |\bfk|\Sigma)| \leq 2\pi^2 |\bfk|^2 \Sigma^2$ for all $\Sigma \in \bbR$, 
		\begin{align*}
			\sum_{\bfk \in \Lambda^*\backslash \{{\bf0}\}, |\bfk|\leq R} \frac{1}{|\bfk|^4} \cos(2\pi \bfk\cdot \bfA) \int_0^\infty \chi(\Sigma) [1-\cos(2\pi |\bfk| \Sigma)] \dd \Sigma = O(\varepsilon^2 R \lVert \chi \rVert_{L^1})
		\end{align*}
		for any $R>1$. 
		\item On the other hand, just using a sup bound,
		\begin{align*}
			\sum_{\bfk\in \Lambda^*, |\bfk|>R} \frac{1}{|\bfk|^4} \cos(2\pi \bfk\cdot \bfA) \int_0^\infty \chi(\Sigma) [1-\cos(2\pi |\bfk| \Sigma)] \dd \Sigma = O(R^{-1} \lVert \chi \rVert_{L^1}).
		\end{align*}
	\end{itemize}
	Combining these estimates, we conclude that 
	\begin{equation}
		\int_0^\infty \chi(\Sigma) \tilde{N}_{3,1;\Lambda,\bfA}(\Sigma) \dd \Sigma = O((\varepsilon \log (1+r) +\varepsilon^2 R+ R^{-1} + r^{-1})\lVert \chi \rVert_{L^1} + r^{-1} \varepsilon \lVert \chi' \rVert_{L^1}).
		\label{eq:misc_49h}
	\end{equation}
	We now fix $\chi_1 \in C_{\mathrm{c}}^\infty(\bbR)$, supported in the open unit interval and satisfying $\int_0^1 \chi_1(\sigma)\dd \sigma = 1$, and we define rescalings $\smash{\chi_\varepsilon = \varepsilon^{-1} \chi_1(\varepsilon^{-1} \sigma)}$ for each $\varepsilon \in (0,1)$. Then $\lVert \chi_\varepsilon \rVert_{L^1} = \lVert \chi_1 \rVert_{L^1}$ and $\lVert \chi_\varepsilon ' \rVert_{L^1} = \varepsilon^{-1} \lVert \chi_1' \rVert_{L^1}$. So, \cref{eq:misc_49h} says 
	\begin{equation}
		\int_0^\infty \chi_{\varepsilon}(\Sigma) \tilde{N}_{3,1;\Lambda,\bfA}(\Sigma) \dd \Sigma = O(\varepsilon \log (1+r) + \varepsilon^2 R + R^{-1} + r^{-1}).
		\label{eq:misc_l45}
	\end{equation}
	We can choose  $r=R =  \varepsilon^{-1}$ to conclude that 
	\begin{equation} 
		\lim_{\varepsilon \to 0^+} \int_0^\infty \chi_{\varepsilon}(\Sigma) \tilde{N}_{3,1;\Lambda,\bfA}(\Sigma) \dd \Sigma = 0.
		\label{eq:misc_34c}
	\end{equation} 
	By the right-continuity of $\tilde{N}_{3,1;\Lambda,\bfA}$ at the origin, the left-hand side of \cref{eq:misc_34c} is $\lim_{\varepsilon\to 0^+} \tilde{N}_{3,1;\Lambda,\bfA}(\varepsilon)=-C$. We can therefore conclude that $C=0$. 
\end{proof}
Plugging this information into \cref{eq:misc_zzz}, we can deduce the main proposition of this section:
\begin{propositionp}
	\label{prop:N3k_final}
	For each $k \in \bbN$,  
	\begin{multline}
		N_{3,k;\Lambda,\bfA}(\Sigma) = \frac{8\pi}{(3+k)!} \frac{\Sigma^{3+k}}{\operatorname{Covol}(\Lambda)} \\ +\frac{1}{\pi \operatorname{Covol}(\Lambda)} \sum_{j=0}^{k-1} \frac{k-j+1}{j!}  \frac{1}{(2\pi)^{k-j+1}} \Im[i^{k-j}] \Big( \sum_{\bfk\in \Lambda^*\backslash \{\bf0\}} \frac{\cos(2\pi \bfk\cdot \bfA)}{|\bfk|^{k-j+3}} \Big) \Sigma^j + o_{k;\Lambda,\bfA}(\Sigma)
	\end{multline}
	as an element of $\dot{\calS}'(\bbR^{\geq 0})$, 
	where $o_{k;\Lambda,\bfA} \in \dot{\calS}'(\bbR^{\geq 0})$ is given by \cref{eq:ok}. 
\end{propositionp}
Comparing \Cref{prop:N3k_final} with \cref{eq:iterated_formula} and \cref{eq:c35}, we conclude \Cref{thm:main1}.

\section{The asymptotics of $N_{3,k;\Lambda,\bfA}$, $k\geq 1$, directly (sans sharpness)}
\label{sec:as_part_1}

For $k\geq 2$, there exist constants $c_k$ such that $|o_{k;\Lambda,\bfA}(\Sigma)| \leq c_k \Sigma$ for all $\Sigma \geq 0$. This follows from the convergence of 
\begin{equation}
	\sum_{\bfk \in \Lambda^* \backslash \{\bf0\}} \frac{1}{|\bfk|^{2+k}}
	\label{eq:3g}
\end{equation}
for $k\geq 2$, which can be proven e.g.\ via comparison with a volume integral.  We can therefore conclude that, for $k\geq 2$,  
\begin{multline}
	N_{3,k;\Lambda,\bfA}(\Sigma) = \frac{8\pi}{(3+k)!} \frac{\Sigma^{3+k}}{\operatorname{Covol}(\Lambda)} \\ + \frac{1}{\pi \operatorname{Covol}(\Lambda)} \sum_{j=0}^{k-1} \frac{k-j+1}{j!}  \frac{1}{(2\pi)^{k-j+1}} \Im[i^{k-j}] \Big( \sum_{\bfk\in \Lambda^*\backslash \{\bf0\}} \frac{\cos(2\pi\bfk\cdot \bfA)}{|\bfk|^{(k-j+3)}} \Big) \Sigma^j + O(\Sigma) , 
	\label{eq:h41}
\end{multline} 
as for all $\Sigma> 0$. Hence, \Cref{thm:main3} follows immediately from \Cref{thm:main1}. The remainder of this section is devoted to the $k=1$ case. 

For $k=1$, \cref{eq:3g} is harmonically divergent, and so if it is the case that the series in \cref{eq:ok} is convergent for given $\Sigma>0$, it is going to only be conditionally convergent. Nevertheless, we can still prove that 
\begin{equation}
	N_{3,1;\Lambda,\bfA}(\Sigma) = \frac{\pi\Sigma^4}{3\operatorname{Covol}(\Lambda)} + O(\Sigma \log \Sigma) 
	\label{eq:h42}
\end{equation}
as $\Sigma\to\infty$.
By \cref{eq:misc_43h}, 
\begin{equation}
	N_{3,1;\Lambda,\bfA}(\Sigma) = \frac{\pi\Sigma^{4}}{3\operatorname{Covol}(\Lambda)} + \frac{\Sigma}{2\pi^2 \operatorname{Covol}(\Lambda)} \sum_{\bfk \in \Lambda^*\backslash \{\bf0\}} \frac{\cos(2\pi\bfk\cdot \bfA)}{|\bfk|^{3}} \sin(2\pi |\bfk| \Sigma)  + O(\Sigma), 
	\label{eq:k55}
\end{equation}
in the sense that the left-hand side, viewed as an element of $\calS'(\bbR^{\geq 0})$, and 
\begin{equation} 
	\frac{\pi \Sigma^4}{3\operatorname{Covol}(\Lambda)} + \frac{\Sigma}{2\pi^2 \operatorname{Covol}(\Lambda)} \sum_{\bfk \in \Lambda^*\backslash \{\bf0\}} \frac{ \cos(2\pi \bfk\cdot \bfA)}{|\bfk|^3} \sin(2\pi |\bfk| \Sigma), 
\end{equation} 
also viewed as an element of $\smash{\calS'(\bbR^{\geq 0}_\Sigma)}$, differ by an element of $\smash{\calS'(\bbR^{\geq 0})}$ lying in $\langle \Sigma \rangle L^\infty(\bbR^{\geq 0})$. In fact, we can replace $\langle \Sigma \rangle L^\infty(\bbR^{\geq 0})$ with $\langle \Sigma \rangle C_{\mathrm{b}}^0(\bbR^{\geq 0})$ in the previous sentence, where  $C_{\mathrm{b}}^0\,(\bbR^{\geq 0})$ is the set of continuous \textit{bounded} functions on the closed half-line.

Let $\chi \in C_{\mathrm{c}}^\infty(\bbR)$ be a smooth, compactly supported, and real-valued function supported within the open unit interval $(0,1)$, satisfying
\begin{equation} 
	\int_0^1 \chi(\sigma) \dd \sigma = 1 \quad \text{ and } \quad \int_0^1 \chi(\sigma) \sigma^k \dd \sigma = 0
\end{equation} 
for all $k=1,2,3,4$. 
Such a function exists, and indeed $\chi$ can be constructed as a linear combination of $5$ bump functions via a linear algebraic argument:
\begin{itemize} 
	\item Let $\phi_1,\ldots,\phi_5 \in C_{\mathrm{c}}^\infty((0,1))$ and $\frakM$ be the $5\times 5$ matrix whose $(i,j)$th entry is $\smash{\int_0^1 \phi_i(\sigma) \sigma^j \dd \sigma}$. Then, $\chi$ can be taken as a linear combination of $\phi_1,\ldots,\phi_5$ if $\det \frakM \neq 0$. Fix $\phi_0 \in C_{\mathrm{c}}^\infty(\bbR)$ with $\phi_0(0)=1$ and total mass one, so that, setting 
	\[\phi_i(\sigma) = w^{-1}\phi_0(w^{-1}(\sigma-a_i))
	\] 
	for $i=1,2,3,4,5$, 
	$\frakM$ can be made arbitrarily close in operator norm to the Vandermonde matrix $\frakV$ on $a_1,\ldots,a_5$ (and thus $\det \frakM$ made arbitrarily close to $\det \frakV$) by taking $w\to 0^+$. If $a_1,\ldots,a_5$ are distinct, then $\det \frakV \neq 0$. So, for sufficiently small $w$, $\det \frakM \neq 0$ too. 
\end{itemize}

For each $\Sigma_0>1$, let $\chi_{\Sigma_0^2}(\sigma) = \varsigma_{\Sigma_0^2} \chi( \varsigma_{\Sigma_0^2} (\sigma-\Sigma_0))$ for $\varsigma_{\Sigma_0^2} = 1/((\Sigma_0^2+1)^{1/2}-\Sigma_0) = \langle \Sigma_0 \rangle(1+O(1/\langle \Sigma_0^2\rangle ))$. 
Note that  
\begin{equation}
	\int_{-\infty}^{+\infty} \chi_{\Sigma_0^2}(\sigma) \dd \sigma = \int_{\Sigma_0}^{\sqrt{\Sigma_0^2+1}} \chi_{\Sigma_0^2}(\sigma) \dd \sigma = 1.
\end{equation}
Then, 
\begin{equation}
	\int_{0}^{\infty} \chi_{\Sigma_0^2}(\Sigma) \int_0^\Sigma N_{3;\Lambda,\bfA}(\sigma) \dd \sigma \dd \Sigma  = \int_0^{\Sigma_0} N_{3;\Lambda,\bfA}(\sigma) \dd \sigma + \int_0^\infty \chi_{\Sigma_0^2}(\Sigma) \int_{\Sigma_0}^\Sigma N_{3;\Lambda,\bfA}(\sigma) \dd \sigma \dd \Sigma ,
\end{equation}
which is 
\begin{multline} 
	\int_0^{\Sigma_0} N_{3;\Lambda,\bfA}(\sigma) \dd \sigma +  N_{3;\Lambda,\bfA}(\Sigma_0)\int_0^\infty \chi_{\Sigma_0^2}(\Sigma) (\Sigma - \Sigma_0) \dd \Sigma \\ +\int_0^\infty \chi_{\Sigma_0^2}(\Sigma) \int_{\Sigma_0}^\Sigma (N_{3;\Lambda,\bfA}(\sigma)-N_{3;\Lambda,\bfA}(\Sigma_0)) \dd \sigma \dd \Sigma. 
\end{multline} 	
Per Gauss, the last term is $O(\Sigma_0)$. So, 
\begin{align}
	\begin{split} 
		\int_{0}^{\infty} \chi_{\Sigma_0^2}(\Sigma) \int_0^\Sigma N_{3;\Lambda,\bfA}(\sigma) \dd \sigma \dd \Sigma  
		&= \int_0^{\Sigma_0} N_{3;\Lambda,\bfA}(\sigma) \dd \sigma  \\ 
		&\qquad\qquad\qquad +  N_{3;\Lambda,\bfA}(\Sigma_0)\int_0^\infty \chi_{\Sigma_0^2}(\Sigma) (\Sigma - \Sigma_0) \dd \Sigma + O(\Sigma_0) \\
		&= \int_0^{\Sigma_0} N_{3;\Lambda,\bfA}(\sigma) \dd \sigma +  \frac{N_{3;\Lambda,\bfA}(\Sigma_0)}{\varsigma_{\Sigma_0^2}}  \int_0^1 \chi(\sigma )\sigma \dd \sigma +O(\Sigma_0) \\ 
		&= \int_0^{\Sigma_0} N_{3;\Lambda,\bfA}(\sigma) \dd \sigma + O(\Sigma_0). 
	\end{split} 
\end{align}
On the other hand, 
\begin{equation}
	\int_0^\infty \chi_{\Sigma_0^2}(\Sigma) \Sigma^4 \dd \Sigma = \int_{0}^{1} \chi(\sigma) \Big( \frac{\sigma}{\varsigma_{\Sigma_0^2}} - \Sigma_0 \Big)^4 \dd \sigma = \Sigma_0^4. 
\end{equation}
Integrating both sides of \cref{eq:k55} against $\chi_{\Sigma_0^2}$ therefore yields 
\begin{multline}
	\int_0^{\Sigma_0} N_{3;\Lambda,\bfA}(\sigma) \dd \sigma = \frac{\pi \Sigma_0^4}{3 \operatorname{Covol}(\Lambda)}  \\ + \frac{1}{\operatorname{Covol}(\Lambda)} \sum_{\bfk\in \Lambda^*\backslash \{\bf0\}}\frac{1}{|\bfk|^{3} } \cos(2\pi \bfk\cdot \bfA) \int_0^{\infty} \frac{\Sigma}{2\pi^2} \sin(2\pi |\bfk| \Sigma) \chi_{\Sigma_0^2}(\Sigma) \dd \Sigma  + O(\Sigma_0) ,
\end{multline} 
where the infinite series is absolutely summable. 
We split the sum into two halves, one with  $|\bfk|\leq R$ for to-be-determined $R>0$ and the (convergent) remainder. The first half is bounded as follows: noting that $\lVert \chi_{\Sigma_0} \rVert_{L^1} =\lVert \chi \rVert_{L^1}$,
\begin{multline}
	\Big|\sum_{\bfk \in \Lambda^*\backslash \{{\bf0}\} , |\bfk| \leq R} \frac{1}{|\bfk|^{3}} \cos(2\pi \bfk\cdot \bfA) \int_0^{\infty} \frac{\Sigma}{2\pi^2} \sin(2\pi |\bfk| \Sigma) \chi_{\Sigma_0^2}(\Sigma) \dd \Sigma \Big| \\ \leq \sum_{\bfk \in \Lambda^*\backslash \{{\bf0}\} , |\bfk| \leq R} \frac{1}{|\bfk|^{3}} \Big|\int_0^{\infty} \frac{\Sigma}{2\pi^2} \sin(2\pi |\bfk| \Sigma) \chi_{\Sigma_0^2}(\Sigma) \dd \Sigma \Big|,
\end{multline}
which is bounded above by 
\begin{equation} 
	\lVert \chi \rVert_{L^1} (\Sigma_0^2+1)^{1/2}  \sum_{\bfk \in \Lambda^* \backslash \{\bf0\},|\bfk| \leq R} \frac{1}{2\pi^2 |\bfk|^{3}}   \leq \Sigma_0 \log (R+1) \digamma
\end{equation} 
for some $\digamma=\digamma(\chi,\Lambda)>0$ that does not depend on $\Sigma_0,R$.
The second half is bounded as follows: 
\begin{multline}
	\Big|\sum_{\bfk \in \Lambda^* , |\bfk|>R} \frac{1}{|\bfk|^{3}} \cos(2\pi \bfk\cdot \bfA) \int_0^{\infty} \frac{\Sigma}{2\pi^2} \sin(2\pi |\bfk| \Sigma) \chi_{\Sigma_0^2}(\Sigma) \dd \Sigma \Big|  \\ \leq \Big|\sum_{\bfk \in \Lambda^*,|\bfk|>R} \frac{1}{|\bfk|^{4}} \cos(2\pi \bfk\cdot \bfA)  \int_0^{\infty} \frac{\Sigma}{4\pi^3} \cos(2\pi |\bfk| \Sigma) \chi_{\Sigma_0^2}'(\Sigma) \dd \Sigma \Big|  \\ +\Big|\sum_{\bfk \in \Lambda^*,|\bfk|>R}^\infty \frac{1}{|\bfk|^{4}} \cos(2\pi\bfk\cdot \bfA) \int_0^{\infty} \frac{1}{4\pi^3} \cos(2\pi |\bfk| \Sigma) \chi_{\Sigma_0^2}(\Sigma) \dd \Sigma \Big|. 
	\label{eq:k39}
\end{multline}
Using $\chi_{\Sigma_0^2}'(\Sigma) = \varsigma_{\Sigma_0^2}^2 \chi'(\varsigma_{\Sigma_0^2}(\Sigma-\Sigma_0))$, 
\begin{multline}
	\Big|\sum_{\bfk \in \Lambda^*,|\bfk|>R}^\infty \frac{1}{|\bfk|^{4}} \cos(2\pi \bfk\cdot \bfA) \int_0^{\infty} \frac{\Sigma}{4\pi^3} \cos(2\pi |\bfk| \Sigma) \chi_{\Sigma_0^2}'(\Sigma) \dd \Sigma \Big|
	\\ = \varsigma_{\Sigma_0^2}^2 \Big|\sum_{\bfk \in \Lambda^*,|\bfk|>R}^\infty \frac{1}{|\bfk|^{4}} \cos(2\pi \bfk\cdot \bfA) \int_0^{\infty} \frac{\Sigma}{4\pi^3} \cos(2\pi |\bfk| \Sigma) \chi'(\varsigma_{\Sigma_0^2}(\Sigma-\Sigma_0 )) \dd \Sigma \Big|
\end{multline}
is bounded above by 
\begin{equation}
	\digamma \Sigma_0^2 \sum_{\bfk\in \Lambda^*, |\bfk|>R}^\infty \frac{1}{|\bfk|^{4}}= O(\Sigma_0^2 R^{-1})
\end{equation}
for some $\digamma>0$ (not necessarily the same as before), 
and likewise for the second term on the right-hand side of \cref{eq:k39}. 
So, if $R>1$, 
\begin{equation}
	\int_0^{\Sigma_0} N_{3;\Lambda,\bfA}(\sigma) \dd \sigma = \frac{\pi \Sigma_0^4}{3 \operatorname{Covol}(\Lambda)} + O(\Sigma_0 \log R ) + O(\Sigma_0^2 R^{-1}). 
\end{equation}
We can therefore take $R = \Sigma_0$ and conclude that
\begin{equation} 
	\int_0^\Sigma N_{3;\Lambda,\bfA}(\sigma) \dd \sigma = \frac{\pi}{3} \frac{\Sigma^4}{\operatorname{Covol}(\Lambda)} + O(\Sigma \log \Sigma)
	\label{eq:95}
\end{equation} 
as $\Sigma\to\infty$.
We have therefore proven \Cref{thm:main2}.

\section{The asymptotics of $N_{3,k;\Lambda,\bfA}$, $k\geq 2$, via the Fourier transform}
\label{sec:as_part_2}

Missing from the previous section is a proof that the asymptotic expansions in \cref{eq:h41} and \cref{eq:h42} are the best asymptotic expansions possible (assuming, at least, that we are only allowed to use polynomials). In this section we will give another proof of these expansions, one which makes it easy to deduce sharpness. While somewhat more involved than the direct arguments above, it uses only structural features of $\calF N_{3,0;\Lambda,\bfA}$ which are guaranteed to hold on conceptual grounds -- the relation to the wave equation on the standard 3-torus -- so this approach has some appeal. 
Note that while we compute out $\calF N_{3,0;\Lambda,\bfA}$ fairly explicitly, we only use this computation in order to prove the estimate \Cref{prop:Fourier_bound} quickly. For the sake of \Cref{thm:main2}, \Cref{thm:main3}, \Cref{thm:main4}, what matters is the truth of this estimate, not the method of proof. 
We proceed via an $L^1,L^2$-based analysis of the Fourier transform $\calF N_{3;\Lambda,\bfA}=\calF N_{3;\Lambda,\bfA}(\tau)$, whose singularity structure we analyze in detail. The results below also serve to verify the hypotheses of the Tauberian theorem in \cite{sussman}, which, as stated there without proof, applies to the 3-torus. 

It is straightforward to compute that:
\begin{propositionp}
	\label{prop:HWT_comp}
	For each $r\geq 0$, the Fourier transform $\calF J\in \calS'(\bbR)$ of the function $J(\Sigma)=\Theta(\Sigma) \Sigma^{3/2} J_{1/2}(2\pi r \Sigma)$ is 
	\begin{equation}
		\calF J(\tau) = 4 i r^{1/2}\frac{ \tau}{( (\tau-i0)^2-4 \pi^2 r^2)^2 }= 4 i r^{1/2} \frac{ \tau-i0}{( (\tau-i0)^2-4 \pi^2 r^2)^2 },
	\end{equation}
	i.e.\ the limit $\lim_{\epsilon\to 0^+} H_\epsilon \in \calS'(\bbR)$ (in the topology of $\calS'(\bbR)$) of $H_\epsilon(\tau) = 4 i r^{1/2} (\tau-i \epsilon) ((\tau - i \epsilon)^2-4\pi^2  r^2)^{-2}$. 
\end{propositionp}

Let $j\in \bbR^+$. (For the present paper, it suffices to consider $j\in \bbN^+$, and for applications to Weyl's law more generally, $j\in \bbN^+/2$ should suffice.) For each $s\in \bbR$, we have a well-defined Fourier multiplier $\langle D \rangle^{-s} : \calS'(\bbR)\to \calS'(\bbR)$. For each $p\in [1,\infty)$, we let $\calS'(\bbR)\cap L^p_{\mathrm{loc}}(\bbR)$ denote the set of tempered distributions locally in $L^p(\bbR)$.
\begin{proposition}
	\label{prop:regcalc}
	Let $j\geq 1$. 
	For each $T,s \in \bbR$ and $p\in [1,\infty)$, $(\tau -T \pm i0)^{-j} \in \langle D \rangle^{-s}( \calS'(\bbR)\cap L^p_{\mathrm{loc}}(\bbR))$ if and only if $s< p^{-1} - j$.
\end{proposition}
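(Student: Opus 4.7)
The plan is to reduce to $T=0$ by translation invariance and then to use Fourier analysis to reduce the question to the standard fact that $|\tau|^{-r}\in L^p_{\mathrm{loc}}(\bbR)$ if and only if $pr<1$.

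First I would extend the Fourier transform computation of \Cref{prop:HWT_comp} to fractional exponents via the $\Gamma$-function identity $\int_0^\infty \sigma^{j-1}e^{-(\epsilon+i\tau)\sigma}\dd\sigma = \Gamma(j)(\epsilon+i\tau)^{-j}$ (valid for $j>0$), the $\epsilon\to 0^+$ limit, and Fourier inversion. This yields an identity of the form $\calF[(\tau-T\pm i 0)^{-j}](\sigma) = c_j\, e^{-iT\sigma}\, |\sigma|^{j-1}\Theta(\mp\sigma)$ for an explicit nonzero constant $c_j$. In particular, setting $v := \langle D\rangle^s (\tau-T\pm i0)^{-j}$, one has $\calF v(\sigma) = c_j e^{-iT\sigma}\langle\sigma\rangle^s|\sigma|^{j-1}\Theta(\mp\sigma)$. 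Since $\langle D\rangle^s$ is pseudolocal, $v$ is smooth on $\bbR\setminus\{T\}$, so whether $v\in L^p_{\mathrm{loc}}(\bbR)$ is a strictly local question near $\tau = T$.

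The central step is to show that, modulo a bounded continuous function, $v$ equals a constant multiple of $(\tau-T\pm i0)^{-(s+j)}$, with the right-hand distribution defined by the same Fourier identity (extended by analytic continuation in $j$). Writing $\langle\sigma\rangle^s = |\sigma|^s + O(|\sigma|^{s-2})$ at infinity, one has the splitting
\begin{equation*}
	\langle\sigma\rangle^s|\sigma|^{j-1}\Theta(\mp\sigma) = |\sigma|^{s+j-1}\Theta(\mp\sigma) + \rho(\sigma),
\end{equation*}
with $\rho$ bounded near $\sigma=0$ (using $j-1\geq 0$) and $\rho(\sigma) = O(\langle\sigma\rangle^{s+j-3})$ at infinity. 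Iterating the asymptotic expansion of $\langle\sigma\rangle^s$ finitely many times absorbs all intermediate terms into explicit distributions of the form $(\tau-T\pm i 0)^{-(s+j-2k)}$, each strictly more regular than the leading one, and leaves a remainder whose Fourier transform is in $L^1(\bbR)$; the latter is thus a bounded continuous function.

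Finally, the local $L^p$ analysis of $(\tau-T\pm i0)^{-r}$ is immediate from the pointwise identity $|(\tau-T\pm i0)^{-r}| = |\tau - T|^{-r}$ for $\tau\neq T$ (since $(\tau-T\pm i0)^{-r} = |\tau-T|^{-r}\exp(\mp i\pi r \Theta(T-\tau))$): the distribution is in $L^p_{\mathrm{loc}}(\bbR)$ if and only if $|\tau-T|^{-r}$ is, i.e., iff $pr<1$. Applying this with $r = s+j$ yields the criterion $s<1/p-j$; for the ``only if'' direction, note that the leading homogeneous singularity of $(\tau-T\pm i0)^{-(s+j)}$ at $\tau = T$ cannot be cancelled by the strictly more regular correction terms, so if $s\geq 1/p-j$ then $v\notin L^p_{\mathrm{loc}}$. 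The main technical obstacle will be the iterative expansion step: one must control the remainders uniformly in $s$ and handle the ``resonant'' cases where $s+j-2k$ is a non-positive integer (where the Fourier inversion acquires a residue contribution and a logarithmic correction can appear), though these exceptional values can be circumvented by a small perturbation of $s$ since the target condition $s<1/p-j$ is open.
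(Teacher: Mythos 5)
Your approach coincides with the paper's: reduce to $T = 0$ by translation invariance, pass to the Fourier side where $\calF^{-1}[(\tau \pm i0)^{-j}](\sigma)$ is a one-sided power $\sigma^{j-1}\Theta(\mp\sigma)$, expand $\langle\sigma\rangle^s$ asymptotically at infinity, and identify the leading singularity of $\langle D\rangle^s (\tau \pm i0)^{-j}$ as a nonzero multiple of $(\tau \pm i0)^{-(s+j)}$, from which the $L^p_{\mathrm{loc}}$ criterion $p(s+j) < 1$ falls out. Your observation that the correction terms drop by even steps of $2$ (because $\langle\sigma\rangle^s = |\sigma|^s(1+\sigma^{-2})^{s/2}$) is actually a slight sharpening over the paper's presentation, which writes the expansion with generic integer steps; and invoking pseudolocality of $\langle D\rangle^s$ to localize to $\tau = T$ is a clean way to phrase what the paper does implicitly.

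There is one place where what you wrote would not quite close. You flag the ``resonant'' cases (where $s+j$ or $s+j-2k$ is an integer) and propose to circumvent them ``by a small perturbation of $s$ since the target condition $s<1/p-j$ is open.'' Openness of the condition only helps for the \emph{if} direction: knowing $v_{s'}\in L^p_{\mathrm{loc}}$ for $s'$ near $s$ does not by itself rule out $v_s \in L^p_{\mathrm{loc}}$ when $s \geq 1/p - j$ is resonant, which is what the \emph{only if} direction requires. The paper closes this by a Fourier-multiplier mapping argument: if $s > p^{-1}-j$ is resonant, pick a non-resonant $s_0 \in (p^{-1}-j, s)$, and note that $P_{s_0,j} = \langle D\rangle^{-(s-s_0)}P_{s,j}$ with $\langle D\rangle^{-(s-s_0)}$ of negative order, so membership of $P_{s,j}$ in $L^p_{\mathrm{loc}}$ would force the same for $P_{s_0,j}$, contradicting the generic case. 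A separate direct argument is also needed for the remaining boundary case $s = p^{-1}-j$ with $s+j\in\bbZ$, which forces $p=1$. Your proof would need one of these additional moves to settle the resonant cases in the \emph{only if} direction; with that supplied, it is the same proof.
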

\begin{proof}
	By the translation invariance of $\langle D \rangle^{-s}$, it suffices to consider the case $T=0$, that is to show that 
	\begin{equation}
		\langle D \rangle^s \frac{1}{(\tau \pm i0)^j} \in L^p_{\mathrm{loc}}(\bbR)
	\end{equation}
	under and only under the stated conditions. 
	Via complex conjugation, it suffices to consider the case of $P_{s,j}(\tau) = \langle D \rangle^s (\tau -i0)^{-j}$. 
	Claim: if $j+s$ is not an integer, then there exists (I) some $s,j$-dependent sequence $\{c_\ell\}_{\ell=0}^\infty$ containing only finitely many nonzero entries and (II) some $E \in C^0(\bbR)$ such that 
	\begin{equation}
		P_{s,j}(\tau)  = \sum_{\ell =0}^\infty   \frac{c_\ell}{(\tau - i0)^{j+s - \ell}} + E(\tau),
		\label{eq:l46}
	\end{equation}
	where $c_0 \neq 0$. 
	Indeed, $\calF^{-1} P_{s,j}(\sigma) = \langle \sigma \rangle^s \calF^{-1} P_{0,j}(\sigma) = \alpha_j\langle \sigma \rangle^s \Theta(\sigma) \sigma^{j-1}$ for some $\alpha_j \in \bbC$ \cite[pg.\ 360]{GS}. For any $S \in \bbN$, for sufficiently large $\sigma>0$ we have 
	\begin{equation} 
		\langle \sigma \rangle^s = E_S(\sigma) + \sum_{n=0}^S \beta_{n,s}  \sigma^{s-n}
		\label{eq:3g4}
	\end{equation} 
	for some $E_S \in \langle \sigma \rangle^{s-S-1} L^\infty[\sigma_0,\infty)$, for some $\sigma_0>0$. Then, 
	\begin{equation} 
		\calF^{-1} P_{s,j}(\sigma) = \alpha_j \Theta(\sigma) \sigma^{j-1} E_S(\sigma) + \sum_{n=0}^S \alpha_j \beta_{n,s} \Theta(\sigma) \sigma^{s+j-n-1}
		\label{eq:93f}
	\end{equation} 
	for $\sigma>\sigma_0$. Defining $E_S:\bbR\backslash \{0\}\to \bbC$ by \cref{eq:93f} \textit{for all} $\sigma\in \bbR\backslash \{0\}$,  $E_S \in \langle \sigma \rangle^{s-S-1} L^\infty(\bbR) + \sigma^{1-j} L^1_{\mathrm{c}}(\bbR)$. If $S<s+j$ then \cref{eq:93f} holds as an equality between tempered distributions. (For such $S$, $\Theta(\sigma) \sigma^{s+j-n} \in L^1_{\mathrm{loc}}(\bbR_\sigma)$ for all $n=0,\ldots,S$, so the right-hand side of \cref{eq:93f} actually makes sense as a tempered distribution.) We take $S = \lfloor s+j  \rfloor$, which is strictly less than $s+j$ if $s+j$ is not an integer. We have  $ \sigma^{j-1} \langle \sigma \rangle^{s-S-1} L^\infty(\bbR) + \sigma^{j-1} L^1_{\mathrm{c}}(\bbR) \subset L^1(\bbR)$, so $\Theta(\sigma) \sigma^{j-1} E_s(\sigma) \in L^1(\bbR)$. 
	Taking the Fourier transform, 
	\cref{eq:l46} holds for 
	\begin{equation} 
		E = \alpha_j \calF [\Theta(\sigma)\sigma^{j-1} E_S(\sigma)] \in C^0(\bbR),
	\end{equation} 
	with $c_\ell = 0$ for $\ell >j+s$. (And we see that the leading order coefficient in the main sum, $c_0$, is nonzero.)
	
	From \cref{eq:l46}, we conclude that $P_{s,j} \in \calS'(\bbR)\cap L^p_{\mathrm{loc}}(\bbR)$ if and only if $j+s \leq 0$ or if $\tau^{-j-s} \in L^p_{\mathrm{loc}}(\bbR)$, meaning that 
	\begin{equation}
		\int_0^1 \frac{1}{\tau^{pj+ps}} \dd \tau < \infty. 
	\end{equation}
	This holds if and only if $pj+ps < 1$. (If $j+s \leq 0$, this holds.) This is equivalent to $s < p^{-1} - j$. This completes the proof of the proposition, except when $s+j$ is an integer, which we handle by reduction to the generic case:
	\begin{itemize}
		\item If $s<p^{-1} - j$, then we can find some $s_0$ with $s<s_0 < p^{-1} -j$ and $s_0 + j$ is not an integer. The argument above therefore shows that $P_{s_0,j} \in \calS'(\bbR)\cap L^p_{\mathrm{loc}}(\bbR)$. We have $P_{s,j} = \langle D \rangle^{s - s_0} P_{s_0,j}$. Since $\langle D \rangle^{s-s_0}: \calS'(\bbR)\to \calS'(\bbR)$ is a Fourier multiplier (and a Kohn-Nirenberg $\Psi$DO) of negative order, the subspace $\calS'(\bbR)\cap L^p_{\mathrm{loc}}(\bbR) \subset \calS'(\bbR)$ is closed under it, so $P_{s,j} \in \calS'(\bbR)\cap  L^p_{\mathrm{loc}}(\bbR) $. 
		\item If $s> p^{-1} - j$, then we can find some $s_0$ with $s>s_0 > p^{-1} - j$ and $s_0 + j$ is not an integer. If $P_{s,j}$ were in $\calS'(\bbR)\cap L^p_{\mathrm{loc}}(\bbR)$, then $P_{s_0,j} = \langle D \rangle^{-(s-s_0)} P_{s,j}$ would be too. But the argument above shows that $P_{s_0,j} \notin L^p_{\mathrm{loc}}(\bbR)$. 
		\item 
		The one remaining case of the proposition is $s = p^{-1} - j$ and $s+j \in \bbZ$. Since $p\in [1,\infty)$, this can only hold if $p=1$. If $P_{s,j}$ were in $\calS'(\bbR)\cap L^1_{\mathrm{loc}}(\bbR)$, then, for any Schwartz $\chi$, $(\calF^{-1} \chi)P_{s,j} \in L^1(\bbR)$, which implies that 
		\begin{equation} 
			\chi(\sigma) * \langle \sigma \rangle^s \Theta(\sigma) \sigma^{j-1} \in C^0(\bbR)
		\end{equation} 
		and satisfies
		\begin{equation} 
			\lim_{\sigma\to\infty} \chi(\sigma) * \langle \sigma \rangle^s \Theta(\sigma) \sigma^{j-1} = 0.
		\end{equation} 
		But, if $\int_{-\infty}^{+\infty} \chi(\sigma)\dd \sigma \neq 0$ then this is false, as $\lim_{\sigma\to\infty} \chi(\sigma) * \langle \sigma \rangle^s \Theta(\sigma) \sigma^{j-1} = \int_{-\infty}^{+\infty}\chi(\sigma) \dd \sigma$.
	\end{itemize}
\end{proof}

We deduce:
\begin{propositionp}
	\label{prop:LP_cor}
	Suppose that $f \in \calS'(\bbR)$ is a tempered distribution with singular support at a single point $T\in \bbR$, and suppose that 
	\begin{equation}
		f(\tau) = \sum_{ j \in \calJ} \frac{\alpha_j}{(\tau-T\pm i0)^j} \bmod C^\infty 
	\end{equation}
	in some neighborhood of $T$, 
	for some finite subset $\calJ\subset [1,\infty)$ and some $\alpha_j \in \bbC$. Then, for  $s\in \bbR$ and $p\in [1,\infty)$, $f\in \langle D \rangle^{-s} L^p_{\mathrm{loc}}(\bbR)$ if and only if $\operatorname{max} \calJ <p^{-1} - s$. 
\end{propositionp}

\begin{proposition}
	\label{prop:Fourier_comp} 
	\Cref{eq:misc_000} holds, and the Fourier transform $\calF N_{3;\Lambda,\bfA}(\tau) \in \calS'(\bbR_\tau)$  of $N_{3;\Lambda,\bfA}$ is given by 
	\begin{equation}
		\calF\Big(N_{3;\Lambda,\bfA}(\Sigma) - \frac{4\pi}{3} \frac{\Sigma^3}{\operatorname{Covol}(\Lambda)} \Theta(\Sigma)\Big)(\tau) =  \frac{8\pi}{\operatorname{Covol}(\Lambda)}  \sum_{\bfk\in \Lambda^*\backslash \{\bf0\}} \frac{\cos(2\pi \bfk\cdot \bfA)}{((\tau -i0)^2 - 4\pi^2 |\bfk|^2)^2},
		\label{eq:3g3}
	\end{equation}
	where the sum on the right-hand side is (unconditionally) convergent in $\calS'(\bbR_\tau)$.
\end{proposition}
\begin{proof} 
	Since $\calF : \calS'(\bbR)\to \calS'(\bbR)$ is continuous, the Fourier transform of the sum on the right-hand side of \Cref{prop:Delta_full} is, up to a factor of $1/\operatorname{Covol}(\Lambda)$,  
	\begin{multline}
		\calF \sum_{\bfk \in \Lambda^*\backslash \{\bf0\} } \frac{2\pi}{|\bfk|^{1/2}} \cos(2\pi \bfk\cdot \bfA) \Theta(\Sigma) \Sigma^{3/2} J_{1/2}(2\pi |\bfk| \Sigma)  \\ = \sum_{\bfk\in \Lambda^* \backslash \{\bf0\}} \frac{2\pi }{|\bfk|^{1/2}} \cos(2\pi \bfk\cdot \bfA) \calF [\Theta(\Sigma)\Sigma^{3/2} J_{1/2}(2\pi |\bfk| \Sigma)]. 
	\end{multline}
	By \Cref{prop:HWT_comp}, the right-hand side is $
	\operatorname{Covol}(\Lambda)^{-1} 8\pi i \tau\sum_{\bfk\in \Lambda^* \backslash \{\bf0\}}   \cos(2\pi \bfk\cdot \bfA) ((\tau-i0)^2-4\pi^2 |\bfk|^2)^{-2}$.
	
	Combining this with standard formula for $\calF [\Theta(\Sigma) \Sigma^2]$ \cite{GS}, we get \cref{eq:misc_000}. On the other hand, integrating, we get \cref{eq:3g3}, \cref{eq:misc_001}.
\end{proof}

Consider now the case $\Lambda=\bbZ^3,\bfA=0$. 
The series, 
\begin{equation}
	\sum_{n=1}^\infty \frac{r_3(n)}{((\tau -i0)^2 - 4\pi^2 n)^2}, 
	\label{eq:n95}
\end{equation}
where $r_3(n)=\{(n_1,n_2,n_3)\in \bbZ^3:n_1^2+n_2^2+n_3^2=n\}$,
on the right-hand side of \cref{eq:3g3} is convergent in the topology of tempered distributions. Moreover, the series converges pointwise for all $\smash{\tau \notin 2\pi \sqrt{\bbN^+}}$. In fact, it converges uniformly on compact neighborhoods disjoint from $\smash{2\pi \sqrt{\bbN^+}}$ (and in fact the same holds for derivatives). At $\tau \in \smash{2\pi \sqrt{\bbN^+}}$, only one term in \cref{eq:n95} is singular, and the sum of the rest is uniformly convergent in some neighborhood. Consequently, the singularity of the right-hand side of \cref{eq:n95} at $\tau = 2 \pi \sqrt{n}$ is no worse than the singularity of the term $((\tau-i0)^2 - 4\pi^2 n)^{-2}$ at $\tau = 2 \pi \sqrt{n}$, and -- assuming that $r_3(n) \neq 0$ -- it is precisely as bad. So, we expect the local Sobolev regularity of $\calF(N_3(\Sigma) - (4/3)\pi \Sigma^3 \Theta(\Sigma))(\tau)$ to be explicitly analyzable by expanding $((\tau-i0)^2 - 4\pi^2 n)^{-2}$ in Laurent series around $\tau = 2 \pi \sqrt{n}$. The global estimates will require an additional argument.

When computing the Fourier transforms of $N_{3,k;\Lambda,\bfA}$ for $k\geq 1$, we multiply by a nonzero power of $1/\tau$. For $\tau\neq 0$, the upshot of the previous paragraph remains unchanged, but this division produces a singularity at $\tau = 0$ in each term in \cref{eq:n95}. Let $\tau_2 = \tau^2$. For $r>0$, 
\begin{equation}
	\frac{\partial^k}{\partial \tau_2^k}\frac{1}{(\tau_2 - 4\pi^2 r^2)^2} \Big|_{\tau_2 = 0} = \frac{ (k+1)!}{ (4\pi^2 r^2)^{k+2}} = \frac{(k+1)!}{(2\pi)^{2k+4}} \frac{1}{r^{2k+4}}. 
\end{equation}
The $K$th order Taylor series of $(\tau_2-4\pi^2 r^2)^{-2}$ in $\tau_2$ around $\tau_2 = 0$ is therefore 
\begin{equation}
	\sum_{k=0}^K \frac{\tau_2^k}{k!}	\frac{\partial^k}{\partial \tau_2^k}\frac{1}{(\tau_2 - 4\pi^2 r^2)^2} \Big|_{\tau_2 = 0} = \sum_{k=0}^K \frac{k+1}{(2\pi)^{2k+4}} \frac{\tau_2^k}{r^{2k+4}},
\end{equation}
which we consider as an even polynomial of $\tau$. 

Consider, for each $L\in \bbN$, $K\in \bbN\cup \{-1\}$, the tempered distribution
\begin{equation}
	F_{L,K,r}(\tau) =  \Big[\frac{1}{(\tau-i0)^{L}}\frac{1}{((\tau-i0)^2 - 4\pi^2 r^2)^2} -  \sum_{k=0}^K \frac{k+1}{(2\pi)^{2k+4}} \frac{1}{r^{2k+4}} \frac{1}{(\tau-i0)^{L-2k}} \Big],
	\label{eq:Fdef}
\end{equation} 
which we will estimate.
\begin{itemize}
	\item Via Taylor's theorem, $F_{L,K,r}$ is a function for $\tau$ in some punctured neighborhood of the origin, of size $O(\tau^{2K+2-L})$ as $\tau\to 0$. We will choose $K$ so that the singularity of $F_{L,K,r}(\tau)$ at $\tau = 0$ is no greater than the singularity of $(\tau_2 - 4\pi^2 r^2)^{-2}$ at $\tau = \pm 2\pi r^2$, that is second order. In other words, we should take $2K+2-L \geq -2$, i.e.\ $K\geq \lceil L/2 \rceil - 2$.
	\item As $\tau \to \pm \infty$, on the other hand, $F_{L,K,r}(\tau) = O(|\tau|^{2K-L})$. In order for $\langle \tau \rangle^{2K-L}$ to be integrable, we should have $2K-L < -1$, i.e.\ $K<(L-1)/2$. 
\end{itemize}
In particular, $K = \lceil L/2 \rceil - 2$ satisfies our requirements.

\begin{proposition}
	\label{prop:inclusion}
	For any $L\in \bbN$, 
	$\lVert \calF F_{L,K,1} \rVert_{\langle \Sigma \rangle L^\infty(\bbR_\Sigma)} < \infty$ if $K= \lceil L/2 \rceil - 2$.  
\end{proposition}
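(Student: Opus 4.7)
The plan is to bound $|\calF F_{L,K,1}(\Sigma)| \leq C\langle\Sigma\rangle$ by decomposing $F_{L,K,1}(\tau)$ according to its singularities and decay, then estimating the Fourier transform of each piece. By construction, the choice $K = \lceil L/2 \rceil - 2$ forces the local singularity at $\tau = 0$ to be of order at most $2$ (since $2K+2-L\geq -2$, with equality when $L$ is even); the two further singularities at $\tau = \pm 2\pi$ are second-order $(\tau\mp 2\pi - i0)^{-2}$-type poles (with lower-order tails) inherited from $((\tau-i0)^2-4\pi^2)^{-2}$; and $F_{L,K,1}(\tau) = O(|\tau|^{-3})$ as $|\tau|\to\infty$, as noted just before the statement.

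Fix a smooth partition of unity $1 = \psi_0 + \psi_+ + \psi_- + \psi_\infty$ on $\bbR_\tau$ with $\psi_0,\psi_\pm$ compactly supported and equal to $1$ in small neighborhoods of $0,\pm 2\pi$ respectively (and vanishing near the other singular points), and $\psi_\infty$ vanishing in neighborhoods of $\{0,\pm 2\pi\}$. The bulk piece $\psi_\infty F_{L,K,1}$ is $C^\infty$ on $\bbR$ with $O(|\tau|^{-3})$ decay, hence in $L^1(\bbR)$, so Riemann--Lebesgue gives a bounded Fourier transform --- contributing $O(1)$ to $\calF F_{L,K,1}$. For each $T\in\{0,2\pi,-2\pi\}$, the localized piece $\psi_T F_{L,K,1}$ can be written as $\psi_T(\tau)\sum_{j=1}^{j_T}\alpha_{T,j}(\tau-T-i0)^{-j}$ with $j_T \leq 2$, plus a smooth compactly supported remainder whose Fourier transform is Schwartz. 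The principal parts are then analyzed via the convolution identity
\[
\calF[\psi_T(\tau)(\tau-T-i0)^{-j}](\Sigma) = \tfrac{1}{2\pi}\bigl(\calF\psi_T \ast \calF[(\tau-T-i0)^{-j}]\bigr)(\Sigma)
\]
combined with the explicit formula $\calF[(\tau-T-i0)^{-j}](\Sigma) = c_j\, e^{-iT\Sigma}\Sigma^{j-1}\Theta(-\Sigma)$, which is derivable inductively from $\calF[(\tau-i0)^{-1}](\Sigma) = 2\pi i\,\Theta(-\Sigma)$ (itself a short contour integration). For $j\leq 2$ this right-hand factor is pointwise dominated by $C\langle\Sigma\rangle$, and convolving with the Schwartz function $\calF\psi_T$ preserves this growth rate.

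The main obstacle --- although not a deep one --- is this last convolution estimate: I must verify that $|(\calF\psi_T \ast (v\Theta(-v)))(\Sigma)|\leq C\langle\Sigma\rangle$, which after the substitution $v = \Sigma - u$ reduces to bounding $\bigl|\Sigma\int_\Sigma^\infty \calF\psi_T(v)\,dv\bigr|$ and $\bigl|\int_\Sigma^\infty v\,\calF\psi_T(v)\,dv\bigr|$ uniformly by $O(\langle\Sigma\rangle)$, and both follow immediately from the Schwartz decay of $\calF\psi_T$ (the first by $\|\calF\psi_T\|_{L^1}<\infty$ times $|\Sigma|$, the second by $\|v\,\calF\psi_T\|_{L^1}<\infty$). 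Summing the bulk ($O(1)$) and three localized ($O(\langle\Sigma\rangle)$) contributions then yields the claim; the partition-of-unity reduction, Laurent expansions, and Riemann--Lebesgue invocation are otherwise routine.
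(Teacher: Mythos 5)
Your proof is correct and follows essentially the same strategy as the paper's: partition the $\tau$-line according to the singular points, bound the ``bulk'' piece by $L^1$--$L^\infty$, and bound the localized pieces by writing them (modulo a smooth compactly supported remainder) as $\psi_T(\tau)\sum_j\alpha_{T,j}(\tau-T-i0)^{-j}$ with $j\le 2$, then invoking the explicit Fourier transform $\calF[(\tau-T-i0)^{-j}](\Sigma)=c_j\,e^{-iT\Sigma}\Sigma^{j-1}\Theta(-\Sigma)$ and the fact that convolution with a Schwartz function preserves the class $\langle\Sigma\rangle L^\infty$. Your convolution estimate is carried out correctly.

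There is one genuine improvement over the paper's argument that you should be aware you have made. The paper's partition uses three functions $\chi_1,\chi_2,\chi_3$ with $\chi_1$ supported near $0$, $\chi_2$ supported near $2\pi$, and $\chi_3$ vanishing on $[-1,8]$, and then asserts that ``$\chi_3 F_{L,K,1}$ is smooth and absolutely integrable.'' This overlooks the second-order singularity of $((\tau-i0)^2-4\pi^2)^{-2}$ at $\tau=-2\pi\approx-6.28$, which lies in $\operatorname{supp}\chi_3$; the claim that $\chi_3 F_{L,K,1}$ is smooth is therefore false as stated, and the paper implicitly needs the very piece $\psi_-$ that you added. (The paper also labels $\chi_3\in\scrS(\bbR)$ even though $\chi_3 = 1-\chi_1-\chi_2$ tends to $1$ at infinity, but that is a cosmetic slip; only the local vanishing near the singularities and the decay of $F_{L,K,1}$ are used.) Your four-piece partition $\psi_0+\psi_++\psi_-+\psi_\infty$, with $\psi_\infty$ vanishing near all three singular points $\{0,\pm 2\pi\}$, is the correct setup and closes this gap without any further change to the argument, since the local analysis near $-2\pi$ is verbatim the same as near $+2\pi$.

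Two very small points worth recording for completeness: (i) you explicitly worked out only the $j=2$ convolution bound $|\calF\psi_T * (v\,\Theta(-v))|(\Sigma)=O(\langle\Sigma\rangle)$; the $j=1$ case reduces to $|\int_\Sigma^\infty\calF\psi_T(u)\,du|\le\lVert\calF\psi_T\rVert_{L^1}=O(1)$ and so is subsumed. (ii) The decay $F_{L,K,1}(\tau)=O(|\tau|^{-3})$ is the worst case ($L$ odd); for $L$ even it is $O(|\tau|^{-4})$. Either way $\psi_\infty F_{L,K,1}\in L^1$, which is all you use.
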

\begin{proof}
	We choose $\chi_1,\chi_2 \in C_{\mathrm{c}}^\infty(\bbR)$ and $\chi_3 \in \calS(\bbR)$ such that $\chi_1$ is supported in a neighborhood of the origin away from $2\pi$, $\chi_2$ is supported in a neighborhood of $2\pi $ away from the origin, $\chi_3$ is identically zero in $[-1,8]$, and $\chi_1+\chi_2+\chi_3 = 1$. (Such functions exist.) Then 
	\begin{equation}
		\lVert \calF F_{L,K,1} \rVert_{\langle \Sigma \rangle L^\infty(\bbR_\Sigma)} \leq \sum_{j=1}^3 \lVert \calF \chi_j F_{L,K,1} \rVert_{\langle \Sigma \rangle L^\infty(\bbR_\Sigma)} . 
		\label{eq:5gh}
	\end{equation}
	The function $\chi_3 F_{L,K,1}$ is smooth and absolutely integrable, so 
	\begin{equation} 
		\lVert \calF \chi_3 F_{L,K,1} \rVert_{\langle \Sigma \rangle L^\infty(\bbR_\Sigma)}<\infty.
		\label{eq:chiF3}
	\end{equation}  
	On  the other hand, $\chi_1 F_{L,K,1} = E(\tau) + \alpha \chi_1(\tau) (\tau -i0)^{-2} +  \beta \chi_1(\tau) (\tau - i0)^{-1}$ for some smooth, absolutely integrable $E$ and some $\alpha,\beta \in \bbC$. Then $\calF E$ is continuous and uniformly bounded, and $\calF (\chi_1 (\tau-i0)^{-2})(\Sigma) = \calF \chi_1 * \calF ((\tau-i0)^{-2})(\Sigma)$ is the convolution of a Schwartz function with an element of $\langle \Sigma \rangle L^\infty (\bbR_\Sigma)$ and is therefore in $\langle \Sigma \rangle L^\infty(\bbR_\Sigma)$ as well. An analogous argument applies to $\calF (\chi_1(\tau) (\tau-i0)^{-1})(\Sigma)$. We conclude that  
	\begin{equation} 
		\lVert \calF \chi_1 F_{L,K,1} \rVert_{\langle \Sigma \rangle L^\infty(\bbR_\Sigma)}<\infty.
		\label{eq:chiF1}
	\end{equation} 
	A similar argument applies to the second term in \cref{eq:5gh}, so 
	\begin{equation} 
		\lVert \calF \chi_2 F_{L,K,1} \rVert_{\langle \Sigma \rangle L^\infty(\bbR_\Sigma)}<\infty.
		\label{eq:chiF2}
	\end{equation} 
	Combining the estimates \cref{eq:chiF3}, \cref{eq:chiF1}, and \cref{eq:chiF2} with \cref{eq:5gh}, we get the result	$\lVert \calF F_{L,K,1} \rVert_{\langle \Sigma \rangle L^\infty(\bbR_\Sigma)} < \infty$.
\end{proof}

\begin{proposition}
	\label{prop:Fourier_bound}
	If $L\in \{2,3,4,\cdots\}$ and  $K= \lceil L/2 \rceil -2$, 
	\begin{equation}
		\sum_{\bfk\in \Lambda^* \backslash \{\bf0\}}  \lVert \calF F_{L,K,|\bfk|} \rVert_{\langle \Sigma \rangle L^\infty(\bbR_\Sigma)} < \infty. 
	\end{equation}
\end{proposition}

\begin{proof} 
	First note that $F_{L,K,|\bfk|}(\tau) = |\bfk|^{-4-L} F_{L,K,1}(\tau/|\bfk|)$. (Indeed, we can replace the $\tau - i0$ in \cref{eq:Fdef} with $|\bfk|(\tau/|\bfk|-i0)$ and then factor out the factor of $|\bfk|$.)  Second, note that $\calF [F_{L,K,1}(\tau/|\bfk|)](\sigma) = |\bfk| \calF F_{L,K,1}(|\bfk| \sigma)$. Third, 
	\begin{align} 
		\lVert \calF F_{L,K,1}(|\bfk|\sigma) \rVert_{\langle \sigma \rangle L^\infty(\bbR_\sigma)} &= \lVert \langle \sigma \rangle^{-1} \calF F_{L,K,1}(|\bfk| \sigma) \rVert_{L^\infty(\bbR_\sigma)} \\ &=  \lVert \langle |\bfk|^{-1} \sigma \rangle^{-1} \calF F_{L,K,1}( \sigma) \rVert_{L^\infty(\bbR_\sigma)}.
	\end{align} 
	Fourth, $\langle |\bfk|^{-1} \sigma \rangle^{-1} \leq  |\bfk| \langle \sigma \rangle^{-1}$. Therefore, $\lVert \calF F_{L,K,|\bfk|} \rVert_{\langle \sigma \rangle L^\infty(\bbR_\sigma)} \leq  |\bfk|^{-2-L} \lVert \calF F_{L,K,1} \rVert_{\langle \sigma \rangle L^\infty(\bbR_\sigma)}$.
	It follows that
	\begin{align}
		\begin{split} 
		\sum_{\bfk\in \Lambda^* \backslash \{\bf0\}}^\infty\lVert \calF F_{L,K,|\bfk|} \rVert_{\langle \sigma \rangle L^\infty(\bbR_\sigma)} &\leq   \lVert \calF F_{L,K,1} \rVert_{\langle \sigma \rangle L^\infty(\bbR_\sigma)} \sum_{\bfk \in \Lambda^* \backslash \{\bf0\}} \frac{1}{|\bfk|^{2+L}}.
		\end{split}
	\end{align}
	This will be finite if (and only if) $L>1$ (and since $L$ is an integer, $L\geq 2$) and $\lVert \calF F_{L,K,1} \rVert_{\langle \sigma \rangle L^\infty(\bbR_\sigma)} <\infty$. By \Cref{prop:inclusion}, this holds for the given $K$. 
\end{proof}

Since
\begin{equation} 
	N_{3,k;\Lambda,\bfA}'(\sigma) = N_{3,k-1;\Lambda,\bfA}(\sigma)
\end{equation} 
holds as an identity of tempered distributions,
\begin{equation} 
	\calF N_{3,k-1;\Lambda,\bfA}(\tau) = \calF [N_{3,k;\Lambda,\bfA}'](\tau) = i \tau \calF N_{3,k;\Lambda,\bfA}(\tau).
\end{equation} 
We inductively conclude (using \Cref{prop:Fourier_integration} repeatedly) that 
\begin{multline} 
	\calF N_{3,k;\Lambda,\bfA}(\tau) = \Big( - \frac{i}{\tau - i0} \Big)^k \calF N_{3,0;\Lambda,\bfA}(\tau) 
	= \Big( - \frac{i}{\tau-i0} \Big)^{k+1} \calF \sum_{\bmSigma \in \Lambda} \delta(\Sigma - |\bmSigma+\bfA|) \\ = \Big( - \frac{i}{\tau-i0} \Big)^{k+1} \frac{1}{\operatorname{Covol}(\Lambda)} \calF \Big[ 4\pi \Sigma^2 \Theta(\Sigma) + \Sigma^{3/2}  \sum_{\bfk\in \Lambda^*\backslash \{\bf0\}} \frac{2\pi }{|\bfk|^{1/2}}  \cos(2\pi \bfk\cdot \bfA) \Theta(\Sigma)  J_{1/2}(2\pi |\bfk| \Sigma)  \Big].
\end{multline}
Consequently, letting $\bar{N}_{3,k;\Lambda,\bfA}(\Sigma) = N_{3,k;\Lambda,\bfA}(\Sigma)- 8\pi \operatorname{Covol}(\Lambda)^{-1} \Sigma^{3+k} \Theta(\Sigma)/(3+k)!$,
we have  
\begin{align}
	\begin{split} 
	\calF\bar{N}_{3,k;\Lambda,\bfA}(\tau)&= \Big( - \frac{i}{\tau-i0} \Big)^{k+1} \frac{1}{\operatorname{Covol}(\Lambda)} \calF \Big[ \Sigma^{3/2}  \sum_{\bfk\in \Lambda^* \backslash \{\bf0\}}\frac{2\pi}{|\bfk|^{1/2}} \cos(2\pi \bfk\cdot \bfA)  \Theta(\Sigma)  J_{1/2}(2\pi |\bfk| \Sigma)  \Big] \\
	&= \Big( - \frac{i}{\tau-i0} \Big)^{k+1} \frac{1}{\operatorname{Covol}(\Lambda)} \Big[ 8\pi i \tau \sum_{\bfk\in \Lambda^* \backslash \{\bf0\}}  \frac{\cos(2\pi \bfk\cdot \bfA)}{((\tau-i0)^2-4\pi^2 |\bfk|^2 )^2} \Big] \\
	&= \Big( - \frac{i}{\tau-i0} \Big)^{k} \frac{ 8\pi}{\operatorname{Covol}(\Lambda)} \Big[  \sum_{\bfk\in \Lambda^* \backslash \{\bf0\}}  \frac{\cos(2\pi \bfk\cdot \bfA)}{((\tau-i0)^2-4\pi^2 |\bfk|^2)^2} \Big].
	\end{split} 
	\label{eq:misc_94h}
\end{align}
For any $K\in \bbN$, we can rewrite \cref{eq:misc_94h} as follows (replacing the dummy variable `$k$' with $m \in \bbN$ to avoid conflict):
\begin{equation}
	\calF\bar{N}_{3,m;\Lambda,\bfA}(\tau) = \frac{8\pi (-i)^m}{\operatorname{Covol}(\Lambda)} \sum_{\bfk\in \Lambda^* \backslash \{\bf0\}} \cos(2\pi \bfk\cdot \bfA)\Big[ F_{m,K,|\bfk|}(\tau)+ \frac{1}{(\tau-i0)^m}\sum_{k=0}^K \frac{k+1}{(2\pi)^{2k+4}} \frac{\tau_2^k}{|\bfk|^{2k+4}}   \Big].
\end{equation}
Rearranging, 
\begin{multline}
	\calF\bar{N}_{3,m;\Lambda,\bfA}(\tau) = \frac{8\pi}{\operatorname{Covol}(\Lambda)} (-i)^m \Big[\sum_{\bfk\in \Lambda^* \backslash \{\bf0\}}  \cos(2\pi \bfk\cdot \bfA) F_{m,K,|\bfk|}(\tau)\Big] \\+ \frac{8 \pi}{\operatorname{Covol}(\Lambda)} \Big( - \frac{i}{\tau-i0}\Big)^m\sum_{k=0}^K \frac{k+1}{(2\pi)^{2k+4}} \tau_2^k \sum_{\bfk\in \Lambda^* \backslash \{\bf0\}} \frac{\cos(2\pi \bfk\cdot \bfA)}{|\bfk|^{2k+4}}.
	\label{eq:misc_111}
\end{multline}
(Note that $\sum_{\bfk\in \Lambda^* \backslash \{\bf0\}} |\bfk|^{-2k-4}$ is absolutely convergent for each $k\geq 0$.)

We now take $m\geq 2$ and $K = \lceil m/2 \rceil - 2$, so that $\sum_{\bfk \in \Lambda^* \backslash \{\bf0\}} \lVert \calF^{-1} F _{m,K,|\bfk|} \rVert_{\langle \Sigma \rangle L^\infty(\bbR_\Sigma)} < \infty$, per the conclusion of \Cref{prop:Fourier_bound}. 

Recall the following computation --- cf.\ \cite[pg.\ 360]{GS}: for each $\ell \in \bbR^+$, \begin{equation} 
	\calF^{-1}[(\tau-i0)^{-\ell}](\sigma) =  (+i)^{\ell} \Theta(\sigma) \sigma^{\ell-1} /  (\ell-1)!.
\end{equation} 
Applying $\calF^{-1}$ to \cref{eq:misc_111} and using the fact that $\sum_{\bfk\in \Lambda^* \backslash \{\bf0\}} \cos(2\pi \bfk\cdot \bfA) F_{m,K,|\bfk|}(\tau)$ is (unconditionally) convergent in $\calS'(\bbR_\tau)$, 
\begin{align}
	\bar{N}_{3,m;\Lambda,\bfA}(\Sigma) &= O( \Sigma ) + \frac{8 \pi (-i)^m}{\operatorname{Covol}(\Lambda)} \sum_{k=0}^K \frac{k+1}{(2\pi)^{2k+4}} \Big( \sum_{\bfk\in \Lambda^* \backslash \{\bf0\}} \frac{\cos(2\pi \bfk\cdot \bfA)}{|\bfk|^{2k+4}} \Big) \calF^{-1}[(\tau-i0)^{2k-m}](\Sigma) \\
	&= O(\Sigma ) + \frac{8 \pi}{\operatorname{Covol}(\Lambda)}  \sum_{k=0}^K \frac{k+1}{(2\pi)^{2k+4}} (-1)^{k} \Big( \sum_{\bfk\in \Lambda^* \backslash \{\bf0\}}\frac{\cos(2\pi \bfk\cdot \bfA)}{|\bfk|^{2k+4}} \Big)   \frac{\Sigma^{m-2k-1}}{(m-2k-1)!}. 
\end{align}
We conclude -- independently of \S\ref{sec:exp},\ref{sec:as_part_1} -- that \cref{eq:iterated_asymptotics} holds, with $C_{j;\Lambda,\bfA} = 0$ if $j \in \bbN$ is odd and with 
\begin{equation} 
	C_{j;\Lambda,\bfA} =   \frac{(2j+4)}{\operatorname{Covol}(\Lambda)  (2\pi)^{j+3}} (-1)^{j/2} \sum_{\bfk\in \Lambda^* \backslash \{\bf0\}} \frac{1}{|\bfk|^{4+j}} \cos(2\pi \bfk\cdot \bfA) 
\end{equation} 
if $j$ is even.

\begin{proposition}
	For all $k\in \bbN$, there does not exist an $\epsilon>0$ such that, for some polynomial $Z(\sigma) = Z_{\Lambda,\bfA}(\sigma) \in \bbC[\sigma]$, we have $N_{3,k;\Lambda,\bfA}(\Sigma)  - Z(\Sigma) = O(\Sigma^{1-\epsilon})$
	as $\Sigma\to\infty$. 
\end{proposition}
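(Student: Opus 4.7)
The plan is to assume for contradiction that $N_{3,k}(\Sigma) - Z(\Sigma) = O(\Sigma^{1-\epsilon})$ for some polynomial $Z \in \bbC[\sigma]$ and some $\epsilon > 0$, and to derive a contradiction by localizing $\calF[(N_{3,k} - Z)\Theta]$ near a point $T = 2\pi\sqrt{n_0}$ at which the coefficient of the leading $(\tau - T - i0)^{-2}$ singularity is nonzero.

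Fix $n_0 \in \bbN^+$ with $r_3(n_0) \neq 0$ (e.g.\ $n_0 = 1$), and set $T = 2\pi\sqrt{n_0}$. Using the partial fraction $((\tau-i\epsilon)^2 - 4\pi^2 n_0)^{-1} = (2T)^{-1}[(\tau - T - i\epsilon)^{-1} - (\tau + T - i\epsilon)^{-1}]$ and squaring, one obtains, in combination with \Cref{prop:Fourier_comp} and the identity $\calF[\bar{N}_{3,k}\Theta](\tau) = (-i/(\tau-i0))^k \calF[\bar{N}_{3,0}\Theta](\tau)$ derived before \cref{eq:misc_94h}, an expansion
\begin{equation*}
  \calF[N_{3,k}\Theta](\tau) = \frac{c}{(\tau - T - i0)^2} + g(\tau)
\end{equation*}
in a neighborhood of $T$, where $c = (-i)^k r_3(n_0)/((2\pi)^{k+1} n_0^{1+k/2}) \neq 0$ and $g$ has at worst a $(\tau - T - i0)^{-1}$-type singularity at $T$. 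The one technical point is that the tail sum $\sum_{n \neq n_0} r_3(n)((\tau-i0)^2 - 4\pi^2 n)^{-2}$, together with all its $\tau$-derivatives, is to converge uniformly on a closed neighborhood of $T$ that avoids $\{\pm 2\pi\sqrt{n}\}_{n \neq n_0}$; this follows by termwise estimation using $r_3(n) = O(n^{1/2+\delta})$, since each term and each of its $\tau$-derivatives is $O(n^{-2})$ uniformly on such a neighborhood.

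Set $h = (N_{3,k} - Z)\Theta$, so $|h(\Sigma)| \leq C\langle\Sigma\rangle^{1-\epsilon}$. Choose $\chi \in C_{\mathrm{c}}^\infty(\bbR)$ supported in a small neighborhood of $T$ disjoint from $\{0\} \cup \{\pm 2\pi\sqrt{n} : n \in \bbN^+ \setminus \{n_0\}\}$, with $\chi(T) = 1$. Since $\calF[Z\Theta]$ is smooth away from $\tau = 0$, the distribution $\chi\cdot\calF h - \chi\cdot\calF[N_{3,k}\Theta]$ is smooth and compactly supported, so its Fourier inverse is Schwartz. Using the model identities $\calF^{-1}[(\tau - T - i0)^{-2}](\sigma) = -\sigma\Theta(\sigma)e^{iT\sigma}$ and $\calF^{-1}[(\tau - T - i0)^{-1}](\sigma) = i\Theta(\sigma)e^{iT\sigma}$, a direct convolution computation exploiting the rapid decay of $\calF^{-1}\chi$ and the normalization $\chi(T) = 1$ yields
\begin{equation*}
  \calF^{-1}[\chi\cdot\calF h](\Sigma) = -c\,\Sigma\,e^{iT\Sigma} + O(1) \qquad \text{as } \Sigma \to \infty.
\end{equation*}
On the other hand $\calF^{-1}[\chi\cdot\calF h] = \calF^{-1}\chi * h$, and from $|h(\Sigma)| \leq C\langle\Sigma\rangle^{1-\epsilon}$ together with $|\calF^{-1}\chi(\rho)| \leq C_N\langle\rho\rangle^{-N}$ (valid for all $N$) a standard convolution estimate gives $|\calF^{-1}\chi * h(\Sigma)| \leq C'\langle\Sigma\rangle^{1-\epsilon}$. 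The two estimates combine to $|c|\Sigma \leq C'\Sigma^{1-\epsilon} + O(1)$ for large $\Sigma$, which is impossible since $c \neq 0$ and $\epsilon > 0$. The main obstacle is the smoothness-with-derivatives claim for the tail sum near $T$; everything else is routine bookkeeping with the Fourier inverses of the model singularities $(\tau - T - i0)^{-j}$ and with the convolution against the Schwartz function $\calF^{-1}\chi$.
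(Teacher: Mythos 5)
Your proposal is correct, but it follows a genuinely different route from the paper's own proof. The paper reasons at the level of $L^2$-based local Sobolev regularity: the hypothesis $N_{3,k}(\Sigma)-Z(\Sigma)=O(\Sigma^{1-\epsilon})$ is first converted into the statement $N_{3,k}-\Theta Z\in\langle\Sigma\rangle^{3/2}L^2(\bbR_\Sigma)$, hence $\calF N_{3,k}\in\langle D\rangle^{3/2}(\scrS'(\bbR)\cap L^2_{\mathrm{loc}}(\bbR_\tau\setminus\{0\}))$, and then the $(\tau-T-i0)^{-2}$ singularity at $T=2\pi\sqrt{n_0}$ is shown, via \Cref{prop:LP_cor} with $j=2$, $s=-3/2$, $p=2$, to be incompatible with that local regularity (since $2\not<1/2+3/2$). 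Your argument instead microlocalizes $\calF[(N_{3,k}-Z)\Theta]$ near $T$ with a cutoff $\chi$ normalized by $\chi(T)=1$, and computes the $\Sigma\to\infty$ behavior of the inverse transform $\calF^{-1}\chi * h$ directly from the model identities for $\calF^{-1}[(\tau-T-i0)^{-j}]$, deriving an explicit leading term $-c\Sigma e^{iT\Sigma}$ with $c\neq 0$ and then comparing against the Peetre-type bound $|\calF^{-1}\chi*h|\lesssim\langle\Sigma\rangle^{1-\epsilon}$ following from the hypothesis. Both arguments turn on exactly the same structural fact -- a second-order pole at $T$ with nonzero coefficient, which you computed correctly as $c=(-i)^k r_3(n_0)/((2\pi)^{k+1}n_0^{1+k/2})$ -- but your version is $L^\infty$-based, avoids \Cref{prop:regcalc} and \Cref{prop:LP_cor} entirely, and produces an explicit linear lower bound on the localized quantity $\calF^{-1}\chi * h$; the tradeoff is that you must verify, by termwise estimation, that the tail $\sum_{n\neq n_0}r_3(n)((\tau-i0)^2-4\pi^2 n)^{-2}$ is $C^\infty$ near $T$ (which is fine: each term and each of its $\tau$-derivatives is $O(n^{-2})$ uniformly on a compact neighborhood of $T$ avoiding the other $\pm 2\pi\sqrt{n}$, and $r_3(n)=O(n^{1/2+\delta})$) and carry out the convolution asymptotics, which the paper sidesteps by appealing to the Sobolev calculus already set up. Your identity $\calF\bar N_{3,k}=(-i/(\tau-i0))^k\calF\bar N_{3,0}$ does hold, though near $T\neq 0$ you could equally well have used the paper's $\calF N_{3,k}=(-i/(\tau-i0))^k\calF N_{3,0}$ directly, since $\calF[\Sigma^{3+k}\Theta]$ and $\calF[Z\Theta]$ are smooth away from the origin.
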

\begin{proof}
	If $N_{3,k;\Lambda,\bfA}(\Sigma)  - Z(\Sigma) = O(\Sigma^{1-\epsilon})$ as $\Sigma\to\infty$, then $N_{3,k;\Lambda,\bfA}(\Sigma)  - \Theta(\Sigma) Z(\Sigma) \in \langle \Sigma \rangle^{3/2} L^2(\bbR_\Sigma)$. 
	Taking the Fourier transform (and using the fact that $\calF[\Theta Z](\tau)$ is singular only at $\tau=0$), we deduce that 
	\begin{equation} 
		\calF N_{3;\Lambda,\bfA}(\tau) \in \langle D \rangle^{3/2}(\calS'(\bbR)\cap L^2_{\mathrm{loc}}(\bbR_\tau\backslash \{0\})).
	\end{equation} 
	Via \Cref{prop:Fourier_comp}, expanding the $\bfk$th term in \cref{eq:3g3} in Taylor series around $\tau = 2\pi |\bfk|$ and checking that the sum of the rest is smooth in a neighborhood of this point, and applying \Cref{prop:LP_cor} with $j=2,s=-3/2,p=2$,  this is not the case.
\end{proof}

This completes the proof of \Cref{thm:main4}. 

\appendix

\section{Integration in $\dot{\calS}'(\bbR^{\geq 0})$}
\label{sec:app}

Recall Schwartz's definition of differentiation of distributions, $\partial:\calS'(\bbR)\to \calS'(\bbR)$ (restricting attention to tempered distributions, for simplicity), defined by
\begin{equation}
	(\partial u)(\chi) = - u(\partial \chi), 
\end{equation}
for any $u\in \calS'(\bbR)$, where $\chi \in \calS(\bbR)$.  Recall or observe that $\smash{\ker_{\calS'(\bbR)}(\partial) = \{u\in \calS'(\bbR): \partial u = 0\}}$ consists precisely of constant functions. 

Restricting $\partial$ to the (topological and linear) subspace $\dot{\calS}'(\bbR^{\geq 0})\subset \calS'(\bbR)$, we get a continuous and linear  map 
\begin{equation} 
	\partial:\dot{\calS}'(\bbR^{\geq 0})\to \dot{\calS}'(\bbR^{\geq 0}),
	\label{eq:partial_res}
\end{equation}
and, since the only constant function in $\smash{\dot{\calS}'(\bbR^{\geq 0})}$ is identically zero, this is \emph{injective}, unlike $\partial :\calS'(\bbR)\to \calS'(\bbR)$. Likewise, since $\partial \bbC[\partial] \delta \subset \bbC[\partial]\delta$, $\partial$ induces a continuous linear map
\begin{equation} 
	\partial: \dot{\calS}'(\bbR^{\geq 0})/ \bbC[\partial] \delta \to \dot{\calS}'(\bbR^{\geq 0})/\bbC[\partial] \delta
\end{equation}
on the quotient LCTVS $\smash{\dot{\calS}'(\bbR^{\geq 0})/ \bbC[\partial] \delta}$. This LCTVS is (per our conventions) canonically identified with $\calS'(\bbR^{\geq 0})$, so we can consider the map as a continuous, linear map $\smash{\partial:\calS'(\bbR^{\geq 0})\to \calS'(\bbR^{\geq 0})}$.
\begin{propositionp}
	\label{prop:mod_ker}
	$\ker_{\calS'(\bbR^{\geq 0})}(\partial) = \{u\in \calS'(\bbR^{\geq 0}): \partial u=0\} = \{c \Theta \bmod \bbC[\partial] \delta : c\in \bbC\}$.
\end{propositionp}
The significance is that an extendable distribution on $\bbR^{\geq 0}$ is only determined by its derivative up to a scalar multiple of $\Theta$.
\begin{proposition}
	Given any $u \in \dot{\calS}'(\bbR^{\geq 0})$, there exists some $v \in \dot{\calS}'(\bbR^{\geq 0})$ such $\partial v = u$. 
	\label{prop:fundamental}
\end{proposition}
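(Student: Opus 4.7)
The plan is to construct an explicit antiderivative operator on test functions and dualize it. Fix, once and for all, a cutoff function $\chi_0 \in C^\infty_{\mathrm{c}}(\bbR)$ with $\operatorname{supp}\chi_0 \subset (-\infty,0)$ and $\int_{\bbR} \chi_0(t)\,\mathrm{d}t = 1$. (Such a $\chi_0$ exists by standard bump-function constructions.) For $\psi\in \scrS(\bbR)$, let $\bar{\psi}=\int_\bbR \psi(t)\,\mathrm{d}t$ and define
\begin{equation}
	(A\psi)(x) = \int_{-\infty}^x \bigl[\psi(t) - \bar{\psi}\,\chi_0(t)\bigr]\,\mathrm{d}t.
\end{equation}
Then $(A\psi)' = \psi - \bar{\psi}\chi_0$ pointwise, and I would verify that $A\psi \in \scrS(\bbR)$ and that the linear map $A:\scrS(\bbR) \to \scrS(\bbR)$ is continuous. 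Smoothness is immediate; rapid decay at $-\infty$ follows from the rapid decay of the integrand, while rapid decay at $+\infty$ uses the cancellation $\int_{\bbR}[\psi - \bar{\psi}\chi_0]\,\mathrm{d}t = 0$ to write $(A\psi)(x) = -\int_x^{\infty}[\psi - \bar{\psi}\chi_0]\,\mathrm{d}t$. The Schwartz seminorms of $A\psi$ are bounded by finitely many seminorms of $\psi$ (for $k\geq 1$, $(A\psi)^{(k)}$ is an explicit Schwartz function; for $k=0$, the two tail integrals give the decay).

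Given $u\in \dot{\scrS}'(\bbR^{\geq 0})$, define $v:\scrS(\bbR)\to \bbC$ by $v(\psi) = -u(A\psi)$. Continuity of $v$ is immediate from the continuity of $A$, so $v \in \scrS'(\bbR)$. Next, I would check that $v$ is supported in $[0,\infty)$: if $\psi\in \scrS(\bbR)$ has $\operatorname{supp}\psi \cap [0,\infty) = \emptyset$, then $\operatorname{supp}\psi \subset (-\infty,0)$, and because $\chi_0$ is also supported in $(-\infty,0)$ the integrand defining $A\psi$ vanishes for $x\geq 0$ while integrating to $\bar{\psi} - \bar{\psi}\cdot 1 = 0$, so $A\psi$ is itself supported in $(-\infty,0)$. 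Since $u$ is supported in $[0,\infty)$, $u(A\psi)=0$, hence $v(\psi)=0$. Thus $v\in \dot{\scrS}'(\bbR^{\geq 0})$.

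Finally, I would verify $\partial v = u$. For any $\psi \in \scrS(\bbR)$, $\bar{\psi'} = \int_\bbR \psi'(t)\,\mathrm{d}t = 0$, so $A(\psi') = \int_{-\infty}^{\cdot} \psi'(t)\,\mathrm{d}t = \psi$ pointwise (since $\psi$ vanishes at $-\infty$). Consequently
\begin{equation}
	(\partial v)(\psi) = -v(\psi') = u(A\psi') = u(\psi),
\end{equation}
as required.

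The only real technical step is checking that $A$ maps $\scrS(\bbR)$ continuously to itself, which is the main (but routine) obstacle: the argument hinges on exploiting the mean-zero property of $\psi - \bar{\psi}\chi_0$ to write $A\psi(x)$ as a tail integral at either $\pm\infty$, whichever is convenient for bounding $|x|^k(A\psi)(x)$. Everything else is formal manipulation using the defining support and differentiation conventions for $\dot{\scrS}'(\bbR^{\geq 0})$. Note that this $v$ is not canonical: it depends on $\chi_0$, and any two choices differ by an element of $\ker_{\dot{\scrS}'(\bbR^{\geq 0})}(\partial)$, which is trivial, so in fact the $v$ is unique --- this is consistent with the paper's earlier observation that elements of $\dot{\scrS}'(\bbR^{\geq 0})$ are determined by their derivatives.
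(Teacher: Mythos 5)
Your construction is correct and follows the same strategy as the paper's proof: build a continuous ``antiderivative'' operator on $\scrS(\bbR)$ by subtracting a fixed cutoff $\chi_0$ weighted by the total integral, and dualize it against $u$. The one substantive difference is your choice of $\chi_0$: the paper takes $\chi_0$ supported in $\bbR^+$, so the resulting primitive ${}^{\chi_0}\!\int u$ is generally \emph{not} supported in $[0,\infty)$, and a final step is needed (observing that $\partial({}^{\chi_0}\!\int u)=u$ vanishes on $(-\infty,0)$, hence ${}^{\chi_0}\!\int u$ is constant there, and subtracting that constant). You instead take $\chi_0$ supported in $(-\infty,0)$, which makes $A\psi$ vanish on a neighborhood of $[0,\infty)$ whenever $\operatorname{supp}\psi$ is disjoint from $[0,\infty)$ (note that this uses that $\operatorname{supp}\psi$, being closed and contained in $(-\infty,0)$, is automatically bounded away from $0$ --- worth stating explicitly, since ``integrand vanishes for $x\geq 0$'' alone only gives $\operatorname{supp}(A\psi)\subset(-\infty,0]$). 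With that small justification supplied, your $v=-u\circ A$ lands directly in $\dot{\scrS}'(\bbR^{\geq 0})$ and the paper's final constant-subtraction step is eliminated. This is a mild but genuine streamlining of the argument.
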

\begin{proof}
	First, given $u\in \dot{\calS}'(\bbR^{\geq 0})$, we define a map $Iu : \partial\calS(\bbR)\to \bbC$ by
	\begin{equation}
		I u (\chi) =  -\int_{-\infty}^{+\infty} u(\Sigma) \Big[ \int_{-\infty}^\Sigma \chi(\sigma) \dd \sigma \Big] \dd \Sigma = -u \Big( \int_{-\infty}^\bullet \chi(\sigma) \dd \sigma  \Big). 
	\end{equation}
	Note that $I u $ is continuous with respect to the subspace topology on $\partial \calS(\bbR)\subset \calS(\bbR)$. 
	Choose any $\chi_0 \in C_{\mathrm{c}}^\infty(\bbR)$ supported in $\bbR^+$ with $\smash{\int_{-\infty}^{+\infty} \chi_0(\sigma) \dd \sigma = 1}$.  Then, any element $\chi \in \calS(\bbR)$ can be written as 
	\begin{equation}
		\chi(\Sigma) = \Big(\chi(\Sigma) - \chi_0(\Sigma) \int_{-\infty}^{+\infty} \chi(\sigma) \dd \sigma \Big) +\chi_0(\Sigma) \int_{-\infty}^{+\infty} \chi(\sigma) \dd \sigma, 
	\end{equation}  
	and the first summand on the right-hand side is in $\partial \calS(\bbR)$. The map $\Pi : \calS(\bbR)\to \partial \calS(\bbR) \subset \calS(\bbR)$ given by $\smash{\chi \mapsto \chi - \chi_0 \int_{-\infty}^{+\infty} \chi(\sigma) \dd \sigma}$ is continuous, so 
	\begin{equation}
		{}^{\chi_0}\!\!\int u : \calS(\bbR)\to \bbC, \qquad \Big[{}^{\chi_0}\!\!\int u\Big] (\chi) = I u (\Pi \chi),
	\end{equation}
	where $\chi \in \calS(\bbR)$,
	defines a continuous map, hence a tempered distribution.  By construction, ${}^{\chi_0}\!\!\int u(\partial \chi) = I u(\partial \chi)  =-u(\chi)$, so $\partial ({}^{\chi_0}\!\!\int) u = u$. 
	
	Depending on the choice of $\chi_0$, ${}^{\chi_0}\!\!\int u \in \calS'(\bbR)$ is not necessarily in $\dot{\calS}'(\bbR^{\geq 0})$. However, $\smash{\partial {}^{\chi_0}\!\!\int u}$ is zero in $(-\infty,0)$, which implies that there exists some constant $c\in \bbC$ such that $v=-c+ {}^{\chi_0}\!\!\int u$ is in $\smash{\dot{\calS}'(\bbR^{\geq 0})}$, and $v$ satisfies $\partial v = u$. 
\end{proof}
Consequently:
\begin{proposition}
	The operator \cref{eq:partial_res} is an automorphism in the category of TVSs. 
	\label{prop:continuity}
\end{proposition}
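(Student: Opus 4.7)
The plan is to upgrade the surjectivity statement of \Cref{prop:fundamental} to a continuous construction of $\partial^{-1}$. Linearity and continuity of $\partial:\dot{\scrS}'(\bbR^{\geq 0})\to \dot{\scrS}'(\bbR^{\geq 0})$ are inherited from $\partial:\scrS'(\bbR)\to \scrS'(\bbR)$ via the subspace topology; injectivity was observed at the top of the appendix (a constant tempered distribution supported in $\bbR^{\geq 0}$ is zero); and surjectivity is exactly \Cref{prop:fundamental}. Only the continuity of the set-theoretic inverse remains.

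First I would fix, in addition to the $\chi_0\in C_{\mathrm{c}}^\infty(\bbR^+)$ with $\int\chi_0\dd\sigma=1$ used in the proof of \Cref{prop:fundamental}, some $\chi_1\in C_{\mathrm{c}}^\infty((-\infty,0))$ with $\int\chi_1\dd\sigma=1$. Unwinding the definition in that proof,
\[
	\bigl({}^{\chi_0}\!{\textstyle\int} u\bigr)(\chi) = - u\Bigl(\int_{-\infty}^\bullet (\Pi\chi)(\sigma)\dd\sigma\Bigr),
\]
so $u\mapsto {}^{\chi_0}\!\int u$ is the transpose of a fixed continuous linear endomorphism of $\scrS(\bbR)$ (namely $\chi\mapsto -\int_{-\infty}^\bullet (\Pi\chi)(\sigma)\dd\sigma$), and is therefore continuous as a map $\scrS'(\bbR)\to \scrS'(\bbR)$.

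Next, the constant $c$ appearing (non-canonically) in the proof of \Cref{prop:fundamental} is actually a continuous linear functional of $u$. Setting $w={}^{\chi_0}\!\int u$, one has $\partial w = u$, so $w|_{(-\infty,0)}$ is a distribution with vanishing derivative and hence equals some constant $c\in\bbC$; pairing with $\chi_1$ yields $w(\chi_1) = c\int\chi_1\dd\sigma = c$, so $u\mapsto c(u)=\bigl({}^{\chi_0}\!\int u\bigr)(\chi_1)$ is continuous. Thus
\[
	\Phi:\scrS'(\bbR)\to \scrS'(\bbR),\qquad \Phi u = {}^{\chi_0}\!{\textstyle\int} u - \bigl({}^{\chi_0}\!{\textstyle\int} u\bigr)(\chi_1)\cdot \mathbf{1},
\]
with $\mathbf{1}$ the constant tempered distribution equal to $1$, is continuous.

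Finally I would verify that $\Phi$ restricts to the desired inverse. For $u\in \dot{\scrS}'(\bbR^{\geq 0})$ the preceding computation gives $\Phi u|_{(-\infty,0)} = c-c = 0$, so $\Phi u \in \dot{\scrS}'(\bbR^{\geq 0})$, while $\partial \Phi u = \partial w = u$; by the already-established injectivity of $\partial$ on $\dot{\scrS}'(\bbR^{\geq 0})$, the restriction $\Phi|_{\dot{\scrS}'(\bbR^{\geq 0})}$ is the unique two-sided inverse of $\partial$. Since it is the restriction of a continuous linear map between ambient spaces to subspaces carrying the subspace topology, it is itself continuous, completing the proof. There is no substantive obstacle here; the only content beyond \Cref{prop:fundamental} is the remark that the non-canonical constant $c$ can be read off as the evaluation pairing against $\chi_1$, making its dependence on $u$ continuous (and linear).
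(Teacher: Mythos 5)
Your proposal is correct and follows the paper's strategy (exhibit a continuous left inverse built out of the $\smash{{}^{\chi_0}\!\int}$ construction from \Cref{prop:fundamental} and use the weak-$*$ topology to read off continuity), but it is actually more careful than the paper's proof on one point. The paper's formula \cref{eq:misc_cu2} writes $\partial^{-1}u(\chi) = -c - u(\cdots)$ and then asserts that, with $\chi$ fixed, continuity in $u$ is ``immediate,'' without addressing the fact that $c=c(u)$ is not a fixed scalar but depends on $u$. You plug exactly that gap: by introducing $\chi_1\in C_{\mathrm{c}}^\infty((-\infty,0))$ with $\int\chi_1=1$, you realize $c(u)$ as the evaluation $\bigl({}^{\chi_0}\!\int u\bigr)(\chi_1)$, which is manifestly a continuous linear functional of $u$, and then package everything into a single continuous linear map $\Phi:\scrS'(\bbR)\to\scrS'(\bbR)$ whose restriction to $\dot{\scrS}'(\bbR^{\geq 0})$ is $\partial^{-1}$. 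Phrasing continuity of ${}^{\chi_0}\!\int$ as the transpose of a fixed continuous endomorphism $\chi\mapsto -\int_{-\infty}^\bullet(\Pi\chi)\dd\sigma$ of $\scrS(\bbR)$ is equivalent to the paper's weak-$*$ argument, so the only genuine novelty is the treatment of $c$; that addition is correct and worth making explicit.
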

\begin{proof}
	By the previous proposition, $\partial:\dot{\calS}'(\bbR^{\geq 0})\to \dot{\calS}'(\bbR^{\geq 0})$ is a bijective linear map, hence a linear automorphism, and it is continuous (since it is continuous on $\calS'(\bbR)$). It remains to show that the set-theoretic inverse
	\begin{equation}
		\int = \partial^{-1} : \dot{\calS}'(\bbR^{\geq 0}) \to \dot{\calS}'(\bbR^{\geq 0}), 
		\label{eq:int}
	\end{equation}
	which is linear,
	is continuous. 
	Since $\dot{\calS}'(\bbR^{\geq 0})\subset \calS'(\bbR)$	is endowed with the subspace topology and $\calS'(\bbR)$ is endowed with the usual weak-$*$ topology, this means showing that $\mathrm{eval}_\chi:\dot{\calS}'(\bbR^{\geq 0})\ni u \mapsto \partial^{-1} u (\chi) \in \bbC$ is continuous for each $\chi \in \calS(\bbR)$. 
	
	Letting $c,\chi_0$ be as in the proof of the previous proposition, 
	\begin{equation}
		\partial^{-1} u (\chi) = -c - u \Big( \int_{-\infty}^\bullet \Big[ \chi(\sigma) - \chi_0(\sigma) \int_{-\infty}^{+\infty} \chi(s) \dd s  \Big] \dd \sigma \Big)
		\label{eq:misc_cu2}
	\end{equation}
	for any $u\in \dot{\calS}'(\bbR^{\geq 0})$. Fixing $\chi$, the Schwartz function in the parentheses above is fixed, so the continuity of \cref{eq:misc_cu2} in $u$ is immediate from the definition of the weak-$*$ topology. 
\end{proof}

\begin{proposition}
	\label{prop:Fourier_integration} 
	If $u,v$ are as in \Cref{prop:fundamental}, then $\calF v = - i (\tau - i0)^{-1} \calF u$, where the multiplication on the right-hand side is well-defined (in the sense of H\"ormander \cite[Definition 2.1, Proposition 2.2]{HAP}). 
\end{proposition}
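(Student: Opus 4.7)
The plan is to identify $v$ explicitly as the convolution $\Theta \ast u$ and then apply the convolution theorem, using the elementary identity $\calF \Theta = -i(\tau - i0)^{-1}$.

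First, I would argue that $\Theta \ast u$ is well-defined as an element of $\dot{\scrS}'(\bbR^{\geq 0})$ and coincides with $v$. Both $\Theta$ and $u$ have support in $\bbR^{\geq 0}$, so for any $\chi \in \scrS(\bbR)$ the pairing $(\Theta \otimes u)(\chi(x+y))$ is controlled by the Schwartz decay of $\chi$ on the quadrant $\{x, y \geq 0\}$; the resulting distribution has support contained in $\bbR^{\geq 0}+\bbR^{\geq 0} = \bbR^{\geq 0}$ and satisfies $\partial(\Theta \ast u) = (\partial \Theta) \ast u = \delta \ast u = u$, so by the injectivity of $\partial$ on $\dot{\scrS}'(\bbR^{\geq 0})$ (noted after \cref{eq:partial_res}), $v = \Theta \ast u$. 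Next, one records $\calF \Theta = -i(\tau - i0)^{-1}$ via the regularization $\calF[\Theta(\sigma) e^{-\epsilon \sigma}](\tau) = -i/(\tau - i\epsilon)$ combined with the convergence $\Theta(\sigma) e^{-\epsilon \sigma} \to \Theta(\sigma)$ in $\scrS'(\bbR)$ as $\epsilon \to 0^+$ and the continuity of $\calF:\scrS'(\bbR)\to\scrS'(\bbR)$.

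Finally, I would invoke the convolution theorem to obtain $\calF v = \calF \Theta \cdot \calF u = -i(\tau - i0)^{-1} \calF u$, provided the product on the right is well-defined in H\"ormander's sense. For this, I would verify the wavefront set transversality condition. A direct computation (e.g.\ using the known forward Fourier transform $\widehat{(\tau - i0)^{-1}}(\xi) = 2\pi i \Theta(-\xi)$, convolved with $\hat{\phi}$ for a localizing $\phi$) shows $\operatorname{WF}((\tau - i0)^{-1}) = \{(0, \xi) : \xi < 0\}$, while the Paley--Wiener--Schwartz theorem applied to $u \in \dot{\scrS}'(\bbR^{\geq 0})$ presents $\calF u$ as the distributional boundary value from the lower half-plane of a holomorphic function of polynomial growth on horizontal strips, whence $\operatorname{WF}(\calF u) \subset \bbR \times \bbR^-$. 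Both wavefront sets lie in the same open half-fiber at every base point, so $\operatorname{WF}((\tau-i0)^{-1})$ never meets $-\operatorname{WF}(\calF u) \subset \bbR \times \bbR^+$, and H\"ormander's multiplication criterion is satisfied.

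The main subtlety I anticipate is pinning down the wavefront set orientation conventions carefully --- which half-plane boundary value corresponds to which conic direction --- since sign conventions vary across references; once this bookkeeping is settled, the remaining steps are routine applications of standard Fourier-analytic identities.
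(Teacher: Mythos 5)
Your approach is genuinely different from the paper's: you identify $v = \Theta * u$ and appeal to a convolution theorem $\calF(\Theta * u) = \calF\Theta \cdot \calF u$, whereas the paper uses only the differential relation $\calF u = i\tau\, \calF v$ and the elementary pointwise identity $\tau\,(\tau - i0)^{-1} = 1$ to write
\[
-i(\tau - i0)^{-1}\,\calF u \;=\; -i(\tau - i0)^{-1}\cdot i\tau\,\calF v \;=\; \tau(\tau - i0)^{-1}\,\calF v \;=\; \calF v ,
\]
sidestepping the convolution entirely. Your wavefront-set verification (that both $(\tau - i0)^{-1}$ and $\calF u$ are boundary values from the lower half-plane, hence have one-sided wavefront sets in the same conic direction) is correct, matches what the paper does, and is the part of the argument both proofs genuinely need.

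The gap is in the phrase ``I would invoke the convolution theorem.'' Neither $\Theta$ nor $u$ is compactly supported, so the standard multiplicativity statements ($\calE' * \scrS'$, $L^1 * L^p$, Schwartz functions, etc.) do not apply directly, and there is no off-the-shelf theorem that says: if $f, g \in \scrS'(\bbR)$ have compatible supports making $f * g$ meaningful, and if $\calF f \cdot \calF g$ happens to be admissible in H\"ormander's sense, then the two are equal. That equality is precisely the content you are trying to establish. To close the gap you would need either a regularization argument (replace $\Theta$ by $\Theta(\sigma)e^{-\epsilon\sigma}$, pass $\epsilon \to 0^+$, and justify both the convergence of $\Theta_\epsilon * u \to \Theta * u$ and of $\calF\Theta_\epsilon \cdot \calF u \to (\tau - i0)^{-1}\calF u$ to the H\"ormander product) or the Laplace-transform framework for distributions supported on $[0,\infty)$ (holomorphic extension to $\operatorname{Im}\tau < 0$, multiplicativity there, then boundary values). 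Either is doable but is real work, not a citation. The paper's route avoids this entirely, since multiplication by the smooth function $\tau$ commutes with the H\"ormander product and $\tau(\tau - i0)^{-1} = 1$ is elementary --- which is why once the wavefront condition is checked, that proof is essentially finished. Separately, your identification $v = \Theta * u$ is correct but also deserves a sentence more of care: the pairing $u\bigl(\int_\bullet^\infty \chi\bigr)$ is not against a Schwartz function, and one must use the support of $u$ in $[0,\infty)$ (i.e.\ the supported-distribution structure of $\dot{\scrS}'(\bbR^{\geq 0})$) to see it is well-defined and independent of extension.
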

\begin{proof}
	Assuming that $- i (\tau - i0)^{-1} \calF u \in \calD '(\bbR)$ is defined via \cite[Definition 2.1]{Hormander68}, then, since $u = \partial v$, 
	\begin{equation} 
		-i (\tau - i0)^{-1} \calF u = \tau (\tau-i0)^{-1} \calF v = \calF v
		\label{eq:misc_1mj}
	\end{equation} 
	as an element of $\calD '(\bbR)$. Since a tempered distribution is uniquely determined by its restriction to elements of $C_{\mathrm{c}}^\infty(\bbR)$, we can deduce from \cref{eq:misc_1mj} that $-i (\tau - i0)^{-1} \calF u = \calF v$ as tempered distributions.
	
	So, it only needs to be checked that the product $(\tau - i0)^{-1} \calF u$ is well-defined in the sense of H\"ormander, which means that 
	\begin{equation} 
		\bbR^2= T^* \bbR \supset \widetilde{\operatorname{WF}} [(\tau-i0)^{-1}] \cap \operatorname{WF} [\calF u] = \varnothing, 
		\label{eq:misc_lll}
	\end{equation} 
where the tilde denotes (co)fiberwise reflection across the zero section. This follows from the observation that the Fourier transform of any element of $\dot{\calS}'(\bbR^{\geq 0})$ has one-sided wavefront set; likewise, $(\tau-i0)^{-1}$ has a one-sided wavefront set, and our conventions regarding signs in the Fourier transform are such that the two sides agree, so that \cref{eq:misc_lll} holds.
\end{proof}
\section*{Acknowledgements}

\Cref{fig} was made in \texttt{Mathematica} (and typeset using Szabolcs Horv\'at's \texttt{MaTeX} package). Code used to make figure available on request.

E.S.\ gratefully acknowledges the support of a Hertz fellowship. 

E.F.\ thanks the Center for Excellence in Education, the Research Science Institute, and MIT Mathematics for providing him with this research opportunity. E.F.\ is grateful for the guidance of RSI and MIT math staff.

In addition, the authors thank the reviewer for their careful reading.
\printbibliography

\end{document}